\newtheorem{theorem}{Theorem}[section]
\newtheorem{lemma}[theorem]{Lemma}
\newtheorem{proposition}[theorem]{Proposition}
\newtheorem{definition}[theorem]{Definition}
\newtheorem{example}[theorem]{Example}
\newtheorem{corollary}[theorem]{Corollary}
\newtheorem{remark}[theorem]{Remark}
\newtheorem{conjecture}[theorem]{Conjecture}
\numberwithin{equation}{section}
\begin{document}
\title[On strong exceptional collections of maximal length]{On strong exceptional collections of line bundles of maximal length on Fano toric Deligne-Mumford stacks }

\begin{abstract}
We study strong exceptional collections of line bundles on Fano toric Deligne-Mumford stacks $\mathbb{P}_{\mathbf{\Sigma}}$ with rank of Picard group at most two. We prove that any strong exceptional collection of line bundles generates the derived category of $\mathbb{P}_{\mathbf{\Sigma}}$, as long as the number of elements in the collection equals the rank of the (Grothendieck) $K$-theory group of $\mathbb{P}_{\mathbf{\Sigma}}$.
\end{abstract}

\author{Lev Borisov}
\address{Department of Mathematics\\
Rutgers University\\
Piscataway, NJ 08854} \email{borisov@math.rutgers.edu}

\author{Chengxi Wang}
\address{Department of Mathematics\\
Rutgers University\\
Piscataway, NJ 08854} \email{cw674@math.rutgers.edu}

\maketitle

\tableofcontents
\section{Introduction}\label{section1}
Constructing phantom and quasiphantom subcategories of the derived category of coherent sheaves
on smooth projective varieties has attracted considerable interest over the years. A quasi-phantom subcategory is an admissible subcategory with
trivial Hochschild homology and with a finite Grothendieck group. A phantom subcategory is an admissible subcategory with trivial Hochschild
homology and a trivial Grothendieck group.

\smallskip

The authors of \cite{BGrS,AO,GS} construct some quasi-phantom subcategories as semiorthogonal complements
to exceptional collections of maximal possible length on certain surfaces of general type for which
$q = p_g = 0$. Moreover, the Grothendieck group of a quasiphantom is
isomorphic to the torsion part of the Picard group of a corresponding surface.
It is natural to ask whether there exists a phantom as a semiorthogonal complement
to an exceptional collection of maximal length on a simply connected surface of general type with
$q = p_g = 0$ like a Barlow surface. It was achieved by B$\ddot{\mathrm{o}}$hning, H-Ch. Graf von Bothmer, L. Katzarkov, and
P. Sosna in \cite{BGKS}. They show that in a small neighbourhood of the surface constructed by Barlow in the moduli space of determinantal Barlow surfaces, the generic surface has a semiorthogonal decomposition of its derived category into a length 11 exceptional sequence of line bundles and a category with trivial Grothendieck group and Hochschild homology.

\smallskip

Moreover, in \cite{SOrlov}, S. Gorchinskiy and D. Orlov construct geometric phantom categories by
considering admissible subcategories generated by the tensor product of two quasi-phantoms for which
orders of their (Grothendieck) $K$-theory groups are coprime. They also prove that these phantom
categories have trivial $K$-motives and, hence, all
their higher $K$-groups are trivial too.
\cite{PSo} Under certain
assumptions on the semi-orthogonal decomposition, this result has implications for the structure of the Chow motive of a variety admitting a
phantom category.

\smallskip

However, \cite{SNo} shows that there are no quasi-phantoms, phantoms or universal phantoms in the derived category of smooth projective curves over a field $k$. Furthermore, if Conjecture \ref{anyrank} given below is confirmed, then it is impossible to build a phantom as a semiorthogonal complement
to an exceptional collection of line bundles of maximal length in the derived category of a Fano toric DM stack $\mathbb{P}_{\mathbf{\Sigma}}$. Our main result in the paper shows this in the case of Picard rank less or equal to two.

\smallskip

The subject of exceptional collections on toric varieties and stacks has its own rich history. Kawamata constructed exceptional collections in the bounded derived categories of of coherent sheaves on smooth Deligne-Mumford stacks in \cite{Ka}. Alastair King conjectured in \cite{Ki} that every smooth toric variety has a full strong exceptional collection of line bundles. Although the conjecture was proved to be false in \cite{HP1}, rich and varied results related to the conjecture were proved in \cite{BHKing,HP2,M,E,I,N.Prabhu}. In particular, it was proved in \cite{BHKing} that there exist full strong exceptional collections of line bundles on smooth toric Fano DM stacks of Picard number no more than two and of any Picard number in dimension two.

\smallskip

The full strong exceptional collections of line bundles constructed in \cite{BHKing} have length equal the rank of the (Grothendieck) $K$-theory group, which is known to be necessary, see for example \cite{SOrlov}. It is natural to ask whether any strong exceptional collection of line bundles of this length is a full strong exceptional collection. That is to say that the subcategory generated by all elements in the strong collection equals $\mathbf{D}^b(coh(\mathbb{P}_{\mathbf{\Sigma}}))$, and there is no orthogonal complement phantom category. We propose the following conjecture.
\begin{conjecture}\label{anyrank}
Any strong exceptional collection of line bundles of maximal length on a Fano toric DM stack is a full strong exceptional collection.
\end{conjecture}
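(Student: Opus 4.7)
The plan is to show that the triangulated subcategory $\mathcal{A} \subseteq \mathbf{D}^b(coh(\mathbb{P}_{\mathbf{\Sigma}}))$ generated by the strong exceptional collection $L_1, \ldots, L_n$ of line bundles must equal the entire derived category. Since $n$ equals the rank of $K_0(\mathbb{P}_{\mathbf{\Sigma}})$ and the classes $[L_i]$ are linearly independent in $K_0$ (this follows from strong exceptionality and the Euler form being upper triangular with ones on the diagonal), the embedding induces an isomorphism on $K_0$. Hence any admissible semiorthogonal complement $\mathcal{A}^\perp$ would have trivial Grothendieck group. The goal is to rule out such a phantom complement intrinsically.

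The next step would be to exploit the toric combinatorial description of morphisms. For any two line bundles $L, L'$ on $\mathbb{P}_{\mathbf{\Sigma}}$, the groups $\mathrm{Ext}^i(L, L')$ decompose as a direct sum over characters of the Picard torus, and each summand is computed as the reduced cohomology of an explicit subcomplex of $\mathbf{\Sigma}$ (Demazure/Cox style). Strong exceptionality then becomes a finite collection of combinatorial vanishing conditions on the differences $L_j \otimes L_i^{-1}$, and the Fano hypothesis plus maximal length forces these differences to populate a prescribed region of $\mathrm{Pic}(\mathbb{P}_{\mathbf{\Sigma}})$.

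I would then induct on the Picard rank $\rho$. Choose a toric Mori contraction $\pi: \mathbb{P}_{\mathbf{\Sigma}} \to \mathbb{P}_{\mathbf{\Sigma}'}$ of an extremal ray and analyze the restriction of the collection to the fibers (which are weighted projective stacks or their close relatives). A relative Beilinson-type resolution along $\pi$ would express every line bundle on $\mathbb{P}_{\mathbf{\Sigma}}$ as an iterated extension of pullbacks of line bundles from $\mathbb{P}_{\mathbf{\Sigma}'}$ twisted by a bounded set of fiber-direction line bundles. If all of these twisted pullbacks can be shown to lie in $\mathcal{A}$, the inductive hypothesis applied to a suitable ``induced'' collection on $\mathbb{P}_{\mathbf{\Sigma}'}$ would complete the argument. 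Throughout, one uses the Fano condition to ensure that Koszul-type complexes arising from toric sections of ample line bundles have vanishing higher cohomology.

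The hard part will be the inductive step, for two reasons. First, an extremal Mori contraction of a Fano toric DM stack need not produce another Fano stack, so the inductive hypothesis cannot be invoked naively; one may need to track the collection through a sequence of Sarkisov-type links in the toric Mori program while preserving both strong exceptionality and maximality. Second, the combinatorial region carved out by strong exceptionality at higher Picard rank is substantially more intricate than the wedge-shaped region available for $\rho \leq 2$, so a classification of ``minimal bad configurations'' in the style of Batyrev's classification of smooth toric Fano varieties may be required. A wall-crossing analysis on the GIT chamber structure of the Cox ring, restricted to chambers where the Fano condition survives, appears to be the most promising way to package these difficulties uniformly.
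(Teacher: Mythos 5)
This statement is a conjecture which the paper itself does not prove in full generality; it establishes only the cases $\mathrm{rk}(\mathrm{Pic}(\mathbb{P}_{\mathbf{\Sigma}}))\le 2$ (Theorems \ref{main1} and \ref{main2}). Your proposal is likewise not a proof: it is an outline whose decisive step --- the induction on Picard rank via toric Mori contractions and relative Beilinson resolutions --- is left entirely open, and you yourself identify the obstructions that make it fail as stated. Concretely: (i) an extremal contraction of a Fano toric DM stack need not be Fano, so the inductive hypothesis is not available; (ii) a relative Beilinson resolution along $\pi$ requires the given collection to be adapted to the fibration (e.g.\ to contain a bounded set of fiber-direction twists of pullbacks), which an \emph{arbitrary} strong exceptional collection of line bundles of maximal length has no reason to satisfy --- this is precisely the content of the conjecture, not a consequence of it; and (iii) the assertion that ``the Fano hypothesis plus maximal length forces these differences to populate a prescribed region of $\mathrm{Pic}$'' is exactly the hard statement one must prove, and you give no mechanism for it. Your opening $K_0$ observation only shows a complement would be a phantom; ruling out phantoms is the whole problem, so nothing is gained there.

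For comparison, the paper's method in the cases it handles is quite different and does supply the missing mechanism: it fixes the known full strong exceptional collection of \cite{BHKing} as a target and \emph{shrinks} the given collection toward it by elementary moves, replacing an extremal element $\mathcal{O}(D_{i_0})$ by $\mathcal{O}(D_{i_0}-E_+)$ or $\mathcal{O}(D_{i_0}+E_-)$. The two key points are that maximality of the length, via Corollary \ref{<rkK}, forces all intermediate twists $\mathcal{O}(D_{i_0}-E_J)$ to already lie in the collection (Lemmas \ref{move}, \ref{+in}), so a Koszul complex places the new bundle in $\mathcal{D}(\mathcal{T})$; and a case analysis with the linear functionals $\alpha$ and $f$ shows the move preserves strong exceptionality and strictly decreases a width invariant, so the process terminates at the standard collection. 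If you want to pursue the general conjecture, the honest difficulty is finding a replacement for this monotone shrinking argument when the combinatorics of $\mathrm{Ext}$-vanishing in $\mathrm{Pic}$ of rank $\ge 3$ no longer reduces to a strip/parallelogram picture; your wall-crossing suggestion is a reasonable place to look, but as written it is a direction, not an argument.
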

In this paper, we prove Conjecture \ref{anyrank} for $\mathrm{rk}(\mathrm{Pic} (\mathbb{P}_{\mathbf{\Sigma}}))=1$ (Theorem \ref{main1}) and $\mathrm{rk}(\mathrm{Pic} (\mathbb{P}_{\mathbf{\Sigma}}))=2$ (Theorem \ref{main2}).  Our main idea is to "shrink" the strong exceptional collection by moving some specific elements successively and eventually obtain a standard full strong exceptional collection given in \cite{BHKing}.

\smallskip

The paper is organized as follows. Section \ref{overview} recalls gives basic knowledge of toric DM stacks
and (strong) exceptional collection of line bundles on $\mathbb{P}_{\mathbf{\Sigma}}$. In Section \ref{1}, we prove Conjecture \ref{anyrank} for the case of
$\mathrm{rk}(\mathrm{Pic} (\mathbb{P}_{\mathbf{\Sigma}}))=1$. In Section \ref{2}, Conjecture \ref{anyrank} for the case of
the rank of $\mathrm{Pic} (\mathbb{P}_{\mathbf{\Sigma}})$ equals two is settled. Section \ref{comments} contains brief discussion of further directions.

\medskip

\noindent{\it Acknowledgements.} This work was prompted by a question of Shizhuo Zhang. Lev Borisov was partially supported by
NSF grant DMS-1601907.

\section{(Strong) exceptional collections of line bundles on toric Deligne-Mumford stacks}\label{overview}
In this section, we give an overview of toric DM stacks $\mathbb{P}_{\mathbf{\Sigma}}$, the corresponding Grothendieck group and (strong) exceptional collections of line bundles on $\mathbb{P}_{\mathbf{\Sigma}}$. Since all of this is well known, we try to be brief.

\smallskip

Let $\Sigma$ be a complete fan with $m$ one-dimensional cones in a lattice $N$ which is a free abelian group of finite rank. The assumption that $N$ has no torsion allows us to refrain from the technicalities of the derived Gale duality of \cite{BCS}. We pick a lattice point $v$ in each of the one-dimensional cones of $\Sigma$ and get a complete stacky fan $\mathbf{\Sigma}=(\Sigma, \{v_i\}_{i=1}^{m})$, see \cite{BCS}. The toric DM stack $\mathbb{P}_{\mathbf{\Sigma}}$ associated to the stacky fan $\mathbf{\Sigma}$ is constructed in \cite{BCS} as a stack version of the homogeneous coordinate ring construction of a toric variety \cite{C}. Line bundles on $\mathbb{P}_{\mathbf{\Sigma}}$ are described in \cite{BH,BHKing} similar to the scheme case of \cite{D,F}.

\begin{proposition}\label{pic}
The Picard group of $\mathbb{P}_{\mathbf{\Sigma}}$ is generated by $\{E_i\}_{i=1}^{m}$ with relations $\sum_{i=1}^{m}(w_i\cdot v_i)E_i$ for all $w$ in the character lattice $M=N^*$.
\end{proposition}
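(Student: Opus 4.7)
The plan is to realize $\mathbb{P}_{\mathbf{\Sigma}}$ as a quotient stack, identify line bundles with characters of the structure group, and read off the claimed presentation directly from the stacky fan data. Following the construction in \cite{BCS}, I would write $\mathbb{P}_{\mathbf{\Sigma}} = [Z/G]$ where $Z = \mathbb{A}^m \setminus V(B(\Sigma))$ is the complement of the irrelevant locus inside $\mathrm{Spec}\,\mathbb{C}[x_1,\ldots,x_m]$, and $G = \mathrm{Hom}_{\mathbb{Z}}(\mathrm{Pic}(\mathbb{P}_{\mathbf{\Sigma}}), \mathbb{C}^*)$ acts via the map $\beta : \mathbb{Z}^m \to N$ sending the $i$-th standard basis vector to $v_i$. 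Since $N$ is torsion-free, the dual sequence splits cleanly, and the character group of $G$ fits into an exact sequence
\begin{equation*}
0 \longrightarrow M \stackrel{\beta^{\vee}}{\longrightarrow} \mathbb{Z}^m \longrightarrow \widehat{G} \longrightarrow 0,
\end{equation*}
where $\beta^{\vee}(w) = (w\cdot v_1,\ldots,w\cdot v_m)$.

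Next I would identify $\mathrm{Pic}(\mathbb{P}_{\mathbf{\Sigma}})$ with $\widehat{G}$. For the quotient stack $[Z/G]$, line bundles correspond to $G$-equivariant line bundles on $Z$, and since $Z$ is an open subset of $\mathbb{A}^m$ whose complement has codimension at least two (a standard consequence of $\Sigma$ being complete, as every ray lies in some maximal cone), one has $\mathrm{Pic}(Z) = 0$. Hence every $G$-equivariant line bundle on $Z$ is trivial as a line bundle and the equivariant structure reduces to a character of $G$, yielding $\mathrm{Pic}(\mathbb{P}_{\mathbf{\Sigma}}) \cong \widehat{G}$.

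Finally, I would match the generators and relations. Under the identification above, the $i$-th standard basis vector of $\mathbb{Z}^m$ corresponds to the character through which $G$ acts on $x_i$, whose associated line bundle is exactly the toric divisor class $E_i = [\{x_i = 0\}]$. The image of $M$ in $\mathbb{Z}^m$ contributes exactly the relations $\sum_i (w\cdot v_i)\, E_i = 0$, one for each $w\in M$, giving the claimed presentation.

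The only genuine step that needs care is the codimension bound for $Z^c$ together with the reduction from $G$-equivariant line bundles on $Z$ to characters of $G$; once these are in place the result is bookkeeping with the stacky fan data. Both are standard but worth recording explicitly, since they are where the hypothesis that $\Sigma$ is complete and $N$ is torsion-free actually enters.
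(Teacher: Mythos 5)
The paper offers no argument of its own here---it simply cites \cite{BHKing}---and your proof is exactly the standard one underlying that citation (quotient presentation $[Z/G]$, Gale duality, identification of $\mathrm{Pic}$ with $\widehat{G}$), so in substance you are reproducing the intended proof and it is correct. Two small points of hygiene: first, defining $G=\mathrm{Hom}_{\mathbb{Z}}(\mathrm{Pic}(\mathbb{P}_{\mathbf{\Sigma}}),\mathbb{C}^*)$ is circular in this context; you should define $G$ as $\mathrm{Hom}_{\mathbb{Z}}(\mathrm{coker}(\beta^{\vee}),\mathbb{C}^*)$ (the torsion-free case of the Gale dual of \cite{BCS}), and only afterwards conclude $\mathrm{Pic}\cong\widehat{G}$. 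Second, the codimension-$\geq 2$ hypothesis is not what gives $\mathrm{Pic}(Z)=0$---that holds for \emph{any} open subset of $\mathbb{A}^m$, since $\mathrm{Cl}(\mathbb{A}^m)\to\mathrm{Cl}(Z)$ is surjective---rather, it is what gives $\mathcal{O}^*(Z)=\mathbb{C}^*$, which is the fact you need so that distinct characters of $G$ yield non-isomorphic equivariant structures, i.e.\ so that $\widehat{G}\to\mathrm{Pic}_G(Z)$ is injective as well as surjective. With those two adjustments the argument is complete.
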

\begin{proof}
See \cite{BHKing}.
\end{proof}

\begin{definition}
An object $F$ in $\mathbf{D}^b(coh(\mathbb{P}_{\mathbf{\Sigma}}))$ is exceptional if $\mathrm{Hom}(F,F)=\mathbb{C}$ and $\mathrm{Ext}^t(F,F)=\mathrm{Hom}(F,F[t])=0$ for $t\neq 0$.
A sequence of exceptional objects $(F_1, F_2,\ldots,F_n)$ in $\mathbf{D}^b(coh(\mathbb{P}_{\mathbf{\Sigma}}))$ is called an exceptional collection if
$$\mathrm{Ext}^t(F_i,F_j)=\mathrm{Hom}(F_i,F_j[t])=0$$ for all $i>j$ and all $t\in \mathbb{Z}$. An exceptional collection is further called a strong exceptional collection if $$\mathrm{Ext}^t(F_i,F_j)=0$$ for all $i<j$ and all $t\in \mathbb{Z}\setminus \{0\}$.
\end{definition}

\begin{remark}\label{indexed}
A subset $\mathcal{T}$ of $\mathrm{Pic} (\mathbb{P}_{\mathbf{\Sigma}})$ can be indexed to form a strong exceptional collection if and only if $\mathrm{Ext}^t(\mathcal{L}_1,\mathcal{L}_2)=0$ for any $\{\mathcal{L}_1,\mathcal{L}_2\}\in\mathcal{T}$ and any $t>0$. The reason is that the existence of nonzero $\mathrm{Hom}(\mathcal{L}_1,\mathcal{L}_2)$ induces a partial order on the set $\mathcal{T}$ which can be extended to a linear order.
\end{remark}

\begin{definition}\label{defi.str}
\cite{BHKing} Let $\mathcal{T}$ be a finite set of line bundles on $\mathbb{P}_{\mathbf{\Sigma}}$ (which are always exceptional objects on $\mathbb{P}_{\mathbf{\Sigma}}$). We call $\mathcal{T}$ a full strong exceptional collection if $$\mathrm{Ext}^t(\mathcal{L}_1,\mathcal{L}_2)$$ for any $\{\mathcal{L}_1,\mathcal{L}_2\}\in\mathcal{T}$ and any $t>0$ and the derived category of $\mathbb{P}_{\mathbf{\Sigma}}$ is generated by the line bundles in $\mathcal{T}$.
\end{definition}

\begin{definition}
A toric DM stack $\mathbb{P}_{\mathbf{\Sigma}}$ is called Fano if the chosen points $v_i$ are precisely the vertices of a simplicial convex polytope in $N_{\mathbb{R}}$.
\end{definition}

\begin{definition}
\cite{BH} Let $\mathbb{P}_{\mathbf{\Sigma}}$ be a smooth DM stack.
The (Grothendieck) $K$-theory group $K_0(\mathbb{P}_{\mathbf{\Sigma}})$ is defined to be the quotient of the free
abelian group generated by coherent sheaves $\mathcal{F}$ on $\mathbb{P}_{\mathbf{\Sigma}}$ by the relations
$[\mathcal{F}_1]-[\mathcal{F}_2]+[\mathcal{F}_3]$ for all exact sequences $0 \rightarrow \mathcal{F}_1 \rightarrow \mathcal{F}_2 \rightarrow \mathcal{F}_3 \rightarrow 0$.
\end{definition}

\begin{lemma}\label{basis}
\cite{SOrlov} Let $\mathbb{P}_{\mathbf{\Sigma}}$ be a Fano toric DM stack and $(\mathcal{F}_1, \mathcal{F}_2,\ldots,\mathcal{F}_n)$ be an exceptional collection of objects in $\mathbf{D}^b(coh(\mathbb{P}_{\mathbf{\Sigma}}))$. If $n=\mathrm{rank}K_0(\mathbb{P}_{\mathbf{\Sigma}})$, then $\mathcal{F}_1, \mathcal{F}_2,\ldots,\mathcal{F}_n$ is a basis of $K_0(\mathbb{P}_{\mathbf{\Sigma}})$.
\end{lemma}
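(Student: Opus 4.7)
The plan is to exploit the Euler form $\chi(\mathcal{F},\mathcal{G}) = \sum_{t} (-1)^t \dim_{\mathbb{C}} \mathrm{Ext}^t(\mathcal{F},\mathcal{G})$, which is well defined and takes finite integer values on $K_0(\mathbb{P}_{\mathbf{\Sigma}})$ because $\mathbb{P}_{\mathbf{\Sigma}}$ is smooth and proper. My first step would be to compute the Gram matrix $G$ with entries $G_{ij} = \chi(\mathcal{F}_i,\mathcal{F}_j)$. Exceptionality of each $\mathcal{F}_i$ gives $G_{ii} = 1$, and the exceptional collection condition $\mathrm{Ext}^t(\mathcal{F}_i,\mathcal{F}_j) = 0$ for $i>j$ and all $t$ forces $G_{ij} = 0$ for $i>j$. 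Hence $G$ is upper triangular with $1$'s on the diagonal, and in particular $\det G = 1$.

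From $\det G = 1$ the basis property would follow in two steps. First, non-singularity of $G$ implies that $[\mathcal{F}_1],\ldots,[\mathcal{F}_n]$ are linearly independent in $K_0(\mathbb{P}_{\mathbf{\Sigma}})\otimes\mathbb{Q}$; since $n = \mathrm{rank}\,K_0(\mathbb{P}_{\mathbf{\Sigma}})$, they form a $\mathbb{Q}$-basis. Second, given any $x\in K_0(\mathbb{P}_{\mathbf{\Sigma}})$, I would write $x = \sum_i c_i [\mathcal{F}_i]$ with $c_i\in\mathbb{Q}$, pair on the right with each $\mathcal{F}_j$ to obtain the linear system $\chi(x,\mathcal{F}_j) = \sum_i c_i G_{ij}$, and observe that since the left-hand side is an integer for every $j$ and $G^{-1}$ has integer entries (because $\det G = 1$), each $c_i$ is in fact an integer. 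This shows that $[\mathcal{F}_1],\ldots,[\mathcal{F}_n]$ also span $K_0(\mathbb{P}_{\mathbf{\Sigma}})$ over $\mathbb{Z}$, and hence form a $\mathbb{Z}$-basis.

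I do not expect any serious obstacle inside the Euler-pairing manipulation itself, which is essentially formal. The one technical input one has to be careful about is the implicit assumption that $K_0(\mathbb{P}_{\mathbf{\Sigma}})$ is torsion-free, because otherwise the passage through $K_0\otimes\mathbb{Q}$ only recovers the quotient of $K_0$ by its torsion subgroup and the $[\mathcal{F}_i]$ might miss a torsion piece. For a smooth Fano toric DM stack this torsion-freeness is standard and follows from the explicit description of $K_0$ given in \cite{BH}, so in practice I would simply cite this fact rather than reprove it and then carry out the two-step argument above.
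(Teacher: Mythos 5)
Your argument is correct. Note first that the paper does not actually prove this lemma: it is stated with a bare citation to \cite{SOrlov}, where the underlying mechanism is the additivity of $K_0$ under semiorthogonal decompositions --- the exceptional collection generates a direct summand $\mathbb{Z}^n$ of $K_0(\mathbb{P}_{\mathbf{\Sigma}})$ whose complement is the $K_0$ of the right orthogonal, and the rank hypothesis forces that complement to be finite, hence zero once $K_0$ is torsion-free. Your unitriangular Gram-matrix argument reaches the same conclusion more directly and with less machinery: the determinant-one Euler pairing matrix simultaneously gives $\mathbb{Q}$-linear independence and the integrality of the coordinates of an arbitrary class, which is exactly what is needed. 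The one input you rightly isolate --- torsion-freeness of $K_0(\mathbb{P}_{\mathbf{\Sigma}})$ --- is genuinely indispensable in either approach (without it the classes $[\mathcal{F}_i]$ only yield a basis of the free quotient of $K_0$, and indeed the phrase ``basis of $K_0$'' in the statement already presupposes freeness); it holds for the stacks considered here by the computation of \cite{BH}, consistent with the paper's standing assumption that $N$ is torsion-free and with the torsion-free Picard group assumed in the sections where the lemma is applied. So your proof is complete as written, and is arguably more self-contained than the route the paper delegates to its reference.
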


\begin{corollary}\label{<rkK}
Let $(\mathcal{F}_1, \mathcal{F}_2,\ldots,\mathcal{F}_n)$ be an exceptional collection of objects in $\mathbf{D}^b(coh(\mathbb{P}_{\mathbf{\Sigma}}))$. Then $n\leq \mathrm{rk}(K_0(\mathbb{P}_{\mathbf{\Sigma}}))$.
\end{corollary}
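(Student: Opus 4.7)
The natural approach is by contradiction, using Lemma~\ref{basis} together with the Euler pairing on $K_0(\mathbb{P}_{\mathbf{\Sigma}})$. Suppose for contradiction that $n > r$, where $r := \mathrm{rk}(K_0(\mathbb{P}_{\mathbf{\Sigma}}))$. Then the truncation $(\mathcal{F}_1, \ldots, \mathcal{F}_r)$ is again an exceptional collection, now of length exactly $r$, so Lemma~\ref{basis} applies and the classes $[\mathcal{F}_1], \ldots, [\mathcal{F}_r]$ form a basis of $K_0(\mathbb{P}_{\mathbf{\Sigma}})$. In particular, $[\mathcal{F}_{r+1}]$ can be written uniquely as $\sum_{i=1}^r a_i [\mathcal{F}_i]$ for some $a_i \in \mathbb{Z}$.

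Next I would bring in the Euler pairing $\chi(\mathcal{E}, \mathcal{G}) := \sum_{t} (-1)^t \dim \mathrm{Ext}^t(\mathcal{E}, \mathcal{G})$, which descends to a well-defined bilinear form on $K_0(\mathbb{P}_{\mathbf{\Sigma}})$ because $\mathbb{P}_{\mathbf{\Sigma}}$ is smooth and proper. Pairing the relation above on the left with $\mathcal{F}_{r+1}$ produces
\[
\chi(\mathcal{F}_{r+1}, \mathcal{F}_{r+1}) \;=\; \sum_{i=1}^r a_i \, \chi(\mathcal{F}_{r+1}, \mathcal{F}_i).
\]
By the defining property of an exceptional collection, $\mathrm{Ext}^t(\mathcal{F}_{r+1}, \mathcal{F}_i) = 0$ for all $t$ whenever $i \leq r$, so every term on the right vanishes, while the left side equals $1$ because $\mathcal{F}_{r+1}$ is exceptional. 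This yields $1 = 0$, the desired contradiction, so we must have $n \leq r$.

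The only step requiring any care is the existence and bilinearity of the Euler pairing $\chi$ on $K_0(\mathbb{P}_{\mathbf{\Sigma}})$; this is standard for smooth proper Deligne--Mumford stacks and is in any case implicit in the reference cited for Lemma~\ref{basis}. Apart from that, the argument is pure formal triangularity of the Gram matrix $\bigl(\chi(\mathcal{F}_i,\mathcal{F}_j)\bigr)_{i,j}$, and I do not foresee a genuine obstacle beyond lining up the correct bookkeeping. An equivalent direct phrasing would avoid the contradiction by observing that this Gram matrix is upper-triangular unipotent, hence invertible over $\mathbb{Z}$, which forces $[\mathcal{F}_1],\ldots,[\mathcal{F}_n]$ to be linearly independent in $K_0(\mathbb{P}_{\mathbf{\Sigma}})$ and therefore $n \leq \mathrm{rk}(K_0(\mathbb{P}_{\mathbf{\Sigma}}))$.
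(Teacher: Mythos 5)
Your argument is correct. The paper gives no proof of this corollary at all---it is presented as an immediate consequence of Lemma~\ref{basis}---and your Euler-pairing argument is exactly the standard way to fill in that gap. Note that your final reformulation (the Gram matrix $\bigl(\chi(\mathcal{F}_i,\mathcal{F}_j)\bigr)$ is upper-triangular unipotent, hence the classes $[\mathcal{F}_i]$ are linearly independent) is actually preferable to the contradiction version: it bypasses Lemma~\ref{basis} entirely, and therefore does not need the Fano hypothesis under which that lemma is stated, only smoothness and properness of $\mathbb{P}_{\mathbf{\Sigma}}$ so that $\chi$ descends to $K_0$.
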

\section{The case of $\mathrm{rk}(\mathrm{Pic} (\mathbb{P}_{\mathbf{\Sigma}}))=1$}\label{1}
In the section, we prove Conjecture \ref{anyrank} when the rank of $\mathrm{Pic} (\mathbb{P}_{\mathbf{\Sigma}})$ is one.

\medskip

Let $\mathbb{P}_{\mathbf{\Sigma}}$ be a Fano toric DM stack such that $\mathrm{Pic} (\mathbb{P}_{\mathbf{\Sigma}})$ has no torsion and rank one. In this case $\mathbb{P}_{\mathbf{\Sigma}}$ is a weighted projective space which we denote by $W\mathbb{P}(w_1,\ldots,w_m)$, where $gcd(w_1,\ldots,w_m)=1$. \footnote{This condition comes from our assumption that $N$ has no torsion.} The rank of $K_0(\mathrm{Pic} (\mathbb{P}_{\mathbf{\Sigma}}))$ is $\sum_{i=1}^mw_i$. The Picard group $\mathrm{Pic} (\mathbb{P}_{\mathbf{\Sigma}})$ is $\{\mathcal{O}(d)| d\in \mathbb{Z}\}$, where $\mathcal{O}(E_i)=\mathcal{O}(w_i)$. By \cite{BHKing}, we know that $\mathbb{P}_{\mathbf{\Sigma}}$ possesses a full strong exceptional collection of line bundles.

\begin{proposition}\label{stronginBH}
\cite{BHKing} Let $\mathcal{T}=\{\mathcal{O}(w)|-\mathrm{rk}(K_0(\mathbb{P}_{\mathbf{\Sigma}}))+1\leq w \leq0\}$. Then $\mathcal{T}$ forms a full strong exceptional collection in the derived category of $W\mathbb{P}(w_1,\ldots,w_m)$.
\end{proposition}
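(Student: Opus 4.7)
The plan is to verify two things about $\mathcal{T}=\{\mathcal{O}(w)\}_{-\sigma+1\leq w\leq 0}$, writing $\sigma:=\sum_{i=1}^m w_i$: that $\mathcal{T}$ is a strong exceptional collection, and that it generates $\mathbf{D}^b(\mathrm{coh}(W\mathbb{P}))$.

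For the strong exceptional property, everything reduces to showing $H^t(W\mathbb{P},\mathcal{O}(d))=0$ for $t>0$ and $d\in\{-(\sigma-1),\ldots,\sigma-1\}$, since $\mathrm{Ext}^t(\mathcal{O}(a),\mathcal{O}(b))=H^t(\mathcal{O}(b-a))$. On the $(m-1)$-dimensional quotient stack $W\mathbb{P}=[(\mathbb{A}^m\setminus 0)/\mathbb{G}_m]$, the intermediate cohomologies $H^i(\mathcal{O}(d))$ for $0<i<m-1$ vanish for every $d$, by a \v{C}ech calculation on the standard affine cover identical to the one for $\mathbb{P}^{m-1}$. For top cohomology, Serre duality together with $K_{W\mathbb{P}}\cong\mathcal{O}(-\sigma)$ gives $H^{m-1}(\mathcal{O}(d))\cong H^0(\mathcal{O}(-d-\sigma))^{\vee}$, which vanishes whenever $-d-\sigma<0$; the range $d\geq -\sigma+1$ ensures this.

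For fullness I would use the Koszul complex of the section $(x_1,\ldots,x_m)\in\Gamma(\bigoplus_i\mathcal{O}(w_i))$: since this section vanishes only at the origin, which has been removed in passing to $W\mathbb{P}$, the complex
\[
0\to\mathcal{O}(-\sigma)\to\cdots\to\bigoplus_i\mathcal{O}(-w_i)\to\mathcal{O}\to 0
\]
is exact on $W\mathbb{P}$. Twisting by $\mathcal{O}(d)$ exhibits $\mathcal{O}(d)$ as an iterated extension of the sheaves $\mathcal{O}(d-\sum_{i\in I}w_i)$ for $\emptyset\neq I\subsetneq\{1,\ldots,m\}$. One then inducts on $d$ starting from $d=1$: for $d=1$, every term $\mathcal{O}(1-\sum_I w_i)$ with $I\neq\emptyset$ has degree in $[-\sigma+1,0]$ and therefore lies in the triangulated subcategory $\langle\mathcal{T}\rangle$; for $d>1$, any term still of positive degree has absolute value strictly less than $d$ and is handled by the inductive hypothesis. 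A symmetric argument (or Serre duality) treats $d\leq -\sigma$. Combined with the standard fact that line bundles generate the derived category of a projective toric DM stack, this yields fullness.

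The step most requiring care is the inductive reduction for generation: one must confirm that the Koszul reduction always produces only line bundles either in $\mathcal{T}$ or of strictly smaller $|d|$, so that the process terminates. The positivity $w_i\geq 1$ is the key ingredient that makes each pass strictly decreasing, and the endpoints of the window $\{-\sigma+1,\ldots,0\}$ are tuned precisely so that the $d=1$ base case lands entirely inside $\mathcal{T}$.
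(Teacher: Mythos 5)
Your argument is correct, but note that the paper itself offers no proof of this proposition: it is quoted verbatim from \cite{BHKing} and the ``proof'' is just the citation. So you are not reproducing the paper's argument but supplying a self-contained one, and it is worth comparing. Your two ingredients --- (i) vanishing of $H^{t}(\mathcal{O}(d))$ for $t>0$ via the absence of intermediate cohomology plus Serre duality with $K=\mathcal{O}(-\sigma)$, and (ii) generation via the Koszul complex of $(x_1,\ldots,x_m)$ with a two-sided induction on $|d|$ --- are exactly the standard route for weighted projective stacks, and in fact ingredient (ii) is the same mechanism this paper deploys later in Corollary \ref{s_1+n} and Lemma \ref{+in}; the proof in \cite{BHKing} is necessarily more elaborate only because it treats arbitrary Picard rank (forbidden-set cohomology and a parallelepiped count), and your argument is the honest specialization of it. Two small points to tighten. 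First, the Koszul complex twisted by $\mathcal{O}(d)$ expresses $\mathcal{O}(d)$ through the bundles $\mathcal{O}(d-\sum_{i\in I}w_i)$ for \emph{all} nonempty $I\subseteq\{1,\ldots,m\}$, including $I=\{1,\ldots,m\}$ which contributes $\mathcal{O}(d-\sigma)$; your restriction to proper subsets is a slip, though harmless, since for $d\geq 1$ one has $d-\sigma\geq -\sigma+1$ when $d\le\sigma$ (so this term lies in $\mathcal{T}$) and $0<d-\sigma<d$ otherwise (so it is covered by your induction). Second, the final appeal to ``line bundles generate the derived category'' deserves a one-line justification: every coherent sheaf on $W\mathbb{P}(w_1,\ldots,w_m)$ is the sheafification of a finitely generated graded module over the Cox ring, which admits a finite free graded resolution by Hilbert's syzygy theorem, so sums of twists $\mathcal{O}(d)$ do generate $\mathbf{D}^b(coh)$. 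With those two touches the proof is complete.
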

\begin{proof}
See \cite{BHKing}.
\end{proof}

From \cite{BHKing}, for any $d_1, d_2\in\mathbb{Z}$, we know that
\begin{equation*}
\begin{split}
&\mathrm{Ext}^{\mathrm{rk}(N)}(\mathcal{O}(d_1),\mathcal{O}(d_2))\neq 0 \Leftrightarrow d_2-d_1=\sum_{i=1}^{m}a_iw_i, \text{ for some }a_i\in \mathbb{Z}_{<0};\\
&\mathrm{Hom}(\mathcal{O}(d_1),\mathcal{O}(d_2))\neq 0 \Leftrightarrow d_2-d_1=\sum_{i=1}^{m}a_iw_i, \text{ for some }a_i\in\mathbb{Z}_{\geq0}.
\end{split}
\end{equation*}

\begin{remark}\label{ec,sec}
In the case of $\mathrm{rk}(\mathrm{Pic} (\mathbb{P}_{\mathbf{\Sigma}}))=1$, any exceptional collection on $X=\mathbb{P}_{\mathbf{\Sigma}}$ is a strong exceptional collection. Indeed, let
$$\mathcal{T}=(\mathcal{O}(s_1),\ldots,\mathcal{O}(s_n))$$ be an exceptional collection on $\mathbb{P}_{\mathbf{\Sigma}}$. We have $\mathrm{Hom}(\mathcal{O}(s_j),\mathcal{O}(s_i))=0$ for $j>i$. Then
$\mathrm{Ext}^{\mathrm{rk}(N)}(\mathcal{O}(s_i),\mathcal{O}(s_j))=0$ for $j>i$. Otherwise, we get $s_j-s_i=\sum_{i=1}^{m}a_iw_i$, where $a_i\in \mathbb{Z}_{<0}$. This implies $s_j-s_i=\sum_{i=1}^{m}b_iw_i$, where $b_i=-a_i\in\mathbb{Z}_{\geq0}$, which contradicts $\mathrm{Hom}(\mathcal{O}(s_j),\mathcal{O}(s_i))=0$.
\end{remark}

{\bf Main idea.} Starting from an exceptional collection $\mathcal{T}$ of line bundles of maximal length, i.e., with $\sum_{i=1}^{m}w_i$ elements, we construct other exceptional collections
of maximal length in $\mathcal{D}(\mathcal{T})$, the subcategory generated by elements in $\mathcal{T}$. Eventually, we will get to the exceptional collection in Proposition \ref{stronginBH} given in \cite{BHKing}.
This allows us to conclude that $\mathcal{D}(\mathcal{T})=\mathbf{D}^b(coh(\mathbb{P}_{\mathbf{\Sigma}}))$.

\smallskip

The main step is to "move" the smallest element of the exceptional collection $\mathcal{T}$ by $\sum_{i=1}^mw_i$, see Figure \ref{fig:1}.
\begin{figure}[H]
  \includegraphics[width=0.96\textwidth]{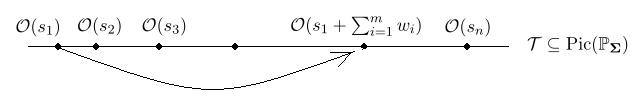}
  \caption{}
  \label{fig:1}
\end{figure}
Specifically: If line bundles $\mathcal{O}(s_1),\ldots,\mathcal{O}(s_n)$, where $s_1<s_2<\cdots<s_n$, form a strong exceptional collection $\mathcal{T}$ of maximal length,
then
\begin{enumerate}
\item $\mathcal{O}(s_1+\sum_{i=1}^mw_i)$ is not in the strong exceptional collection $\mathcal{T}$ (Lemma \ref{s_1+nnotin});
\item By replacing $\mathcal{O}(s_1)$ with $\mathcal{O}(s_1+\sum_{i=1}^mw_i)$ and reordering, we get another strong exceptional collection (Lemma \ref{anotersec});
\item $\mathcal{O}(s_1+\sum_{i=1}^mw_i)\in\mathcal{D}(\mathcal{T})$, so the new collection generates a subcategory of $\mathcal{D}(\mathcal{T})$ (Corollary \ref{s_1+n}).
\end{enumerate}

Once we know these that these moves are possible, we can "shrink" the exceptional collection to make it one from Propostion \ref{stronginBH} (Theorem \ref{main1}).

\begin{example}
We consider an exceptional collection on $W\mathbb{P}(5,6)$ $$(\mathcal{O}(-15),\mathcal{O}(-13),\mathcal{O}(-10), \mathcal{O}(-9),\mathcal{O}(-8),\mathcal{O}(-7),\mathcal{O}(-6),\mathcal{O}(-5),\mathcal{O}(-3),\mathcal{O}(-1),\mathcal{O})$$ of maximal length $11$. We replace $\mathcal{O}(-15)$ by $\mathcal{O}(-15+11)=\mathcal{O}(-4)$ to get another strong exceptional collection $$(\mathcal{O}(-13),\mathcal{O}(-10), \mathcal{O}(-9),\mathcal{O}(-8),\mathcal{O}(-7),\mathcal{O}(-6),\mathcal{O}(-5),\mathcal{O}(-4),\mathcal{O}(-3),\mathcal{O}(-1),\mathcal{O}).$$ Then we replace $\mathcal{O}(-13)$ by $\mathcal{O}(-13+11)=\mathcal{O}(-2)$ to get $$(\mathcal{O}(-10), \mathcal{O}(-9),\mathcal{O}(-8),\mathcal{O}(-7),\mathcal{O}(-6),\mathcal{O}(-5),\mathcal{O}(-4),\mathcal{O}(-3),\mathcal{O}(-2),\mathcal{O}(-1),\mathcal{O})$$ which is a full strong exceptional collection in Proposition \ref{stronginBH} given in \cite{BHKing}.
\end{example}

\begin{lemma}\label{s_1+nnotin}
Let $\mathcal{T}=\{\mathcal{O}(s_1),\ldots,\mathcal{O}(s_n)\}$ be a strong exceptional collection. Then $\mathcal{O}(s_1+\sum_{i=1}^mw_i)\notin \mathcal{T}$.
\end{lemma}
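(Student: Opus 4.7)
The plan is to argue by contradiction. Suppose that $\mathcal{O}(s_1+W) \in \mathcal{T}$, where $W := \sum_{i=1}^m w_i$. Since $w_i \geq 1$ for all $i$, we have $W>0$, so $s_1+W > s_1$, and the ordering $s_1 < s_2 < \cdots < s_n$ forces $s_1+W = s_k$ for some index $k \geq 2$.

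Next, I would invoke the explicit cohomology criterion recalled just before the lemma statement. Applied with $d_1 = s_k$ and $d_2 = s_1$, the relevant difference is
\[
s_1 - s_k = -W = \sum_{i=1}^{m}(-1)\cdot w_i,
\]
which is a $\mathbb{Z}_{<0}$-linear combination of the $w_i$ (all coefficients equal $-1$). By the criterion this yields
\[
\mathrm{Ext}^{\mathrm{rk}(N)}\bigl(\mathcal{O}(s_k),\mathcal{O}(s_1)\bigr) \neq 0.
\]

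Finally, because $k \geq 2 > 1$, the definition of an exceptional collection forces $\mathrm{Ext}^{t}(\mathcal{O}(s_k),\mathcal{O}(s_1)) = 0$ for every $t \in \mathbb{Z}$, directly contradicting the previous display. Hence the assumption was false and $\mathcal{O}(s_1+W) \notin \mathcal{T}$.

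I do not expect any real obstacle: the argument is a one-line application of the "Serre-duality type" non-vanishing criterion, combined with the observation that $-W$ is by construction a strictly negative integer combination of the weights. The only minor point worth flagging is that one must use the Ext criterion in the direction $\mathcal{O}(s_k) \to \mathcal{O}(s_1)$ (not the Hom criterion in the direction $\mathcal{O}(s_1) \to \mathcal{O}(s_k)$, which merely gives a nonzero morphism consistent with the collection's order).
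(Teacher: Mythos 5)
Your proof is correct and is essentially the paper's own argument: both deduce $\mathrm{Ext}^{\mathrm{rk}(N)}(\mathcal{O}(s_1+\sum_i w_i),\mathcal{O}(s_1))\neq 0$ from the criterion with all coefficients $a_i=-1$ and contradict the (strong) exceptionality of $\mathcal{T}$. The only cosmetic difference is that you route the contradiction through the ordering $s_1+W=s_k$ with $k\geq 2$, while the paper applies the vanishing required of a strong exceptional collection directly to the pair without invoking the ordering.
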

\begin{proof}
If $\mathcal{O}(s_1+\sum_{i=1}^mw_i)\in \mathcal{T}$, then $\mathrm{Ext}^{\mathrm{rk}(N)}(\mathcal{O}(s_1+\sum_{i=1}^mw_i),\mathcal{O}(s_1))\neq 0$ since $=s_1-(s_1+\sum_{i=1}^mw_i)=-\sum_{i=1}^nw_i$. This contradicts the assumption that $\mathcal{T}$ is a strong exceptional collection.
\end{proof}

\begin{lemma}\label{anotersec}
Let $\mathcal{T}=\{\mathcal{O}(s_1),\ldots,\mathcal{O}(s_n)\}$ be a strong exceptional collection of maximal length on $\mathbb{P}_{\mathbf{\Sigma}}$, where $s_1<s_2<\cdots<s_n$.
By replacing $\mathcal{O}(s_1)$ with $\mathcal{O}(s_1+\sum_{i=1}^mw_i)$ and reordering, we get another strong exceptional collection.
\end{lemma}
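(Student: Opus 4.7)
Set $W=\sum_{k=1}^{m}w_k$ and consider $\mathcal{T}'=\{\mathcal{O}(s_1+W),\mathcal{O}(s_2),\ldots,\mathcal{O}(s_n)\}$; the plan is to show that, after reordering by increasing twist, $\mathcal{T}'$ is again a strong exceptional collection. Distinctness of the $n$ line bundles in $\mathcal{T}'$ is immediate from Lemma \ref{s_1+nnotin}. By Remark \ref{ec,sec}, any exceptional collection on $\mathbb{P}_{\mathbf{\Sigma}}$ in the rank-one Picard case is automatically strong, and by Remark \ref{indexed} it suffices to verify that $\mathrm{Ext}^t(\mathcal{L},\mathcal{L}')=0$ for every pair of distinct elements of $\mathcal{T}'$ and every $t>0$.

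I would then invoke the criterion recalled at the start of the section: between line bundles on $\mathbb{P}_{\mathbf{\Sigma}}$, the only $\mathrm{Ext}^t$ that can be nonzero for $t>0$ is $t=\mathrm{rk}(N)$, and $\mathrm{Ext}^{\mathrm{rk}(N)}(\mathcal{O}(d_1),\mathcal{O}(d_2))\neq 0$ iff $d_2-d_1\in S^-:=\{\sum_k a_k w_k:a_k\in\mathbb{Z}_{<0}\}$. Pairs drawn entirely from $\{\mathcal{O}(s_2),\ldots,\mathcal{O}(s_n)\}$ inherit the required vanishing from $\mathcal{T}$, so the only new checks are, for each $i\ge 2$: (A) $s_i-s_1-W\notin S^-$, and (B) $s_1+W-s_i\notin S^-$.

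For (B), suppose for contradiction that $s_1+W-s_i=\sum_k a_k w_k$ with each $a_k<0$; subtracting $W=\sum_k w_k$ yields $s_1-s_i=\sum_k(a_k-1)w_k$ with coefficients $\le -2$, so $s_1-s_i\in S^-$ and $\mathrm{Ext}^{\mathrm{rk}(N)}(\mathcal{O}(s_i),\mathcal{O}(s_1))\neq 0$, contradicting the strong exceptionality of $\mathcal{T}$. For (A), suppose $s_i-s_1-W=\sum_k a_k w_k$ with each $a_k<0$; adding $W$ gives $s_i-s_1=\sum_k(a_k+1)w_k$ with each $a_k+1\le 0$, so $s_i-s_1\le 0$ because the $w_k$ are positive, contradicting $s_i>s_1$. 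Both verifications are purely algebraic manipulations with positive weights, so I do not anticipate any serious obstruction; the main thing to be careful about is pairing each of the two cases with the correct direction of the $\mathrm{Ext}^{\mathrm{rk}(N)}$ criterion, and citing Remark \ref{indexed} to justify that once all non-$\mathrm{Hom}$ Exts vanish, a compatible linear ordering exists.
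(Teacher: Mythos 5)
Your proposal is correct and follows essentially the same route as the paper's own proof: both arguments reduce to checking the vanishing of $\mathrm{Ext}^{\mathrm{rk}(N)}$ in the two directions between $\mathcal{O}(s_1+\sum_k w_k)$ and each $\mathcal{O}(s_i)$, using the positivity of the weights for one direction and the contradiction with $\mathrm{Ext}^{\mathrm{rk}(N)}(\mathcal{O}(s_i),\mathcal{O}(s_1))=0$ for the other. Your additional remarks on distinctness and on reordering via Remark \ref{indexed} are consistent with, and slightly more explicit than, the paper's treatment.
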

\begin{proof}
Let $\mathcal{T}^1$ be a collection obtained by replacing $\mathcal{O}(s_1)$ with $\mathcal{O}(s_1+\sum_{i=1}^mw_i)$. For any $i\in\{2,\ldots,n\}$, we have $s_i-s_1-\sum_{i=1}^mw_i>-\sum_{i=1}^mw_i$. Thus $\mathrm{Ext}^{\mathrm{rk}(N)}(\mathcal{O}(s_1+\sum_{i=1}^mw_i),\mathcal{O}(s_i))=0$. Also for any $i\in\{2,\ldots,n\}$, we have $\mathrm{Ext}^{\mathrm{rk}(N)}(\mathcal{O}(s_i),\mathcal{O}(s_1+\sum_{i=1}^mw_i))=0$. Otherwise, we get $s_1+\sum_{i=1}^mw_i-s_i=\sum_{i=1}^{m}a_iw_i$, where $a_i\leq -1$. Thus $s_1-s_i=\sum_{i=1}^mb_iw_i$, where $b_i< -1$. This implies $\mathrm{Ext}^{\mathrm{rk}(N)}(\mathcal{O}(s_i),\mathcal{O}(s_1))\neq0$, which contradicts the assumption that $\mathcal{T}$ is an exceptional collection.
\end{proof}

\begin{lemma}\label{move}
Let $\mathcal{T}=\{\mathcal{O}(s_1),\ldots,\mathcal{O}(s_n)\}$ be a strong exceptional collection of maximal length on $\mathbb{P}_{\mathbf{\Sigma}}$, where $s_1<s_2<\cdots<s_n$. Then $\mathcal{O}(s_1+\sum_{j\in J}w_j)$ is in $\mathcal{T}$ for any proper subset $J\subsetneqq \{1,2,\ldots,m\}$.
\end{lemma}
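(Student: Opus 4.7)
The approach is proof by contradiction, using Corollary \ref{<rkK} on the maximal size of exceptional collections. The case $J=\emptyset$ is immediate, so assume $\emptyset\neq J\subsetneqq\{1,\ldots,m\}$ and write $s:=s_1+\sum_{j\in J}w_j$. If $\mathcal{O}(s)\notin\mathcal{T}$, I will show that $\mathcal{T}':=\mathcal{T}\cup\{\mathcal{O}(s)\}$ still admits an ordering making it a strong exceptional collection. Since $|\mathcal{T}'|=n+1>\mathrm{rk}(K_0(\mathbb{P}_{\mathbf{\Sigma}}))$, this contradicts Corollary \ref{<rkK} and forces $\mathcal{O}(s)\in\mathcal{T}$.

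By Remark \ref{indexed} it suffices to verify that $\mathrm{Ext}^t(\mathcal{O}(s),\mathcal{O}(s_i))$ and $\mathrm{Ext}^t(\mathcal{O}(s_i),\mathcal{O}(s))$ vanish for every index $i$ and every $t>0$. Since cohomology of line bundles on $\mathbb{P}_{\mathbf{\Sigma}}$ is concentrated in degrees $0$ and $\mathrm{rk}(N)$, only $\mathrm{Ext}^{\mathrm{rk}(N)}$ requires checking, and this is controlled by the combinatorial criterion stated just after Proposition \ref{stronginBH}.

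For the first vanishing, suppose $\mathrm{Ext}^{\mathrm{rk}(N)}(\mathcal{O}(s),\mathcal{O}(s_i))\neq 0$, so $s_i-s=\sum_j a_j w_j$ with each $a_j\leq -1$. Substituting the definition of $s$ gives $s_i-s_1=\sum_j(a_j+\mathbf{1}_J(j))w_j$; every coefficient is $\leq 0$, and for $j\notin J$ it is $\leq -1$. Because $J$ is proper such a $j$ exists, so $s_i-s_1<0$, contradicting $s_1=\min_k s_k$. For the second vanishing, suppose $\mathrm{Ext}^{\mathrm{rk}(N)}(\mathcal{O}(s_i),\mathcal{O}(s))\neq 0$, so $s-s_i=\sum_j a_j w_j$ with each $a_j\leq -1$. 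Then $s_i-s_1=\sum_j(-a_j+\mathbf{1}_J(j))w_j$ is a sum with all coefficients $\geq 1$, which is precisely the criterion for $\mathrm{Ext}^{\mathrm{rk}(N)}(\mathcal{O}(s_i),\mathcal{O}(s_1))\neq 0$. Since $\mathcal{O}(s_1)$ precedes $\mathcal{O}(s_i)$ in $\mathcal{T}$, this contradicts the exceptional-collection property of $\mathcal{T}$.

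The whole argument is sign bookkeeping; the one place the hypothesis $J\subsetneqq\{1,\ldots,m\}$ is genuinely used is in producing an index $j\notin J$ in the first case, and similarly the hypothesis that $s_1$ is the minimum is used only in that case. I anticipate no serious technical obstacle beyond organizing the two sign analyses cleanly.
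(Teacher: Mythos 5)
Your proof is correct and follows essentially the same route as the paper's: verify the vanishing of $\mathrm{Ext}^{\mathrm{rk}(N)}$ in both directions between $\mathcal{O}(s)$ and each $\mathcal{O}(s_i)$ by the sign analysis (using $J\subsetneqq\{1,\ldots,m\}$ for one direction and the vanishing of $\mathrm{Ext}^{\mathrm{rk}(N)}(\mathcal{O}(s_i),\mathcal{O}(s_1))$ for the other), then invoke Corollary \ref{<rkK} to conclude $\mathcal{O}(s)$ must already lie in $\mathcal{T}$. The only cosmetic difference is that the paper phrases the first vanishing as the bound $s_k-s>-\sum_{j=1}^m w_j$ rather than your equivalent rearrangement to $s_i-s_1<0$.
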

\begin{proof}
Let $s=s_1+\sum_{j\in J}w_j$.
We have $\mathrm{Ext}^{\mathrm{rk}(N)}(\mathcal{O}(s),\mathcal{O}(s_k))=0$ for all $k\in \{1,2,\ldots,n\}$. Otherwise, we have $s_k-s \in \sum_{i=1}^m\mathbb{Z}_{< 0}w_m$ for some $k$. However, we have $s_k-s_1\geq 0$. So $s_k-s=s_k-s_1-\sum_{j\in J}w_j > -\sum_{j=1}^{m}w_j$, which leads to contradiction.

\smallskip

We have $\mathrm{Ext}^{\mathrm{rk}(N)}(\mathcal{O}(s_k),\mathcal{O}(s))=0$ for all $k\in \{1,2,\ldots,n\}$. Otherwise, we get $s_1+\sum_{j\in J}w_j-s_k=s-s_k=\sum_{i=1}^{m}a_iw_i$ for some $k$, where $a_i\leq -1$. Thus $s_1-s_k=\sum_{i=1}^mb_iw_i$, where $b_i\leq -1$. Therefore $\mathrm{Ext}^{\mathrm{rk}(N)}(\mathcal{O}(s_k),\mathcal{O}(s_1))\neq0$, which contradicts that $\mathcal{T}$ is an exceptional collection.

\smallskip

If $\mathcal{O}(s)$ is not in $\mathcal{T}$, we can get another exceptional collection with $\sum_{i=1}^mw_i+1$ elements by inserting $\mathcal{O}(s)$ into $\mathcal{T}$. This is impossible by Corollary \ref{<rkK}.
\end{proof}

\begin{corollary}\label{s_1+n}
Let $\mathcal{T}=\{\mathcal{O}(s_1),\ldots,\mathcal{O}(s_n)\}$ be a strong exceptional collection of maximal length on $\mathbb{P}_{\mathbf{\Sigma}}$, where $s_1<s_2<\cdots<s_n$.
Then we have $\mathcal{O}(s_1+\sum_{i=1}^mw_i) \in \mathcal{D}(\mathcal{T})$.
\end{corollary}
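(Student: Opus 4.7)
The plan is to exhibit $\mathcal{O}(s_1+\sum_{i=1}^m w_i)$ as the right-most term of an exact sequence of sheaves on $W\mathbb{P}(w_1,\ldots,w_m)$ whose remaining terms are all line bundles of the form $\mathcal{O}(s_1+\sum_{j\in J'}w_j)$ with $J'\subsetneq\{1,\ldots,m\}$. By Lemma \ref{move} each such line bundle belongs to $\mathcal{T}$, hence to $\mathcal{D}(\mathcal{T})$; splitting the long sequence into short exact pieces and using that $\mathcal{D}(\mathcal{T})$ is a triangulated subcategory closed under cones then forces $\mathcal{O}(s_1+\sum_{i=1}^m w_i)$ to lie in $\mathcal{D}(\mathcal{T})$.

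The natural such sequence is the Koszul complex of the homogeneous coordinates $x_1,\ldots,x_m$, viewed as global sections of $\mathcal{O}(w_1),\ldots,\mathcal{O}(w_m)$. On the Cox construction of $W\mathbb{P}(w_1,\ldots,w_m)$ the common vanishing locus of the $x_i$ (the irrelevant origin of $\mathbb{A}^m$) has been removed, so the $\mathbb{G}_m$-equivariant Koszul complex is acyclic and descends to an exact sequence
\begin{equation*}
0\to\mathcal{O}\bigl(-\textstyle\sum_{i=1}^m w_i\bigr)\to\bigoplus_{|J|=m-1}\mathcal{O}\bigl(-\textstyle\sum_{j\in J}w_j\bigr)\to\cdots\to\bigoplus_{i=1}^m\mathcal{O}(-w_i)\to\mathcal{O}\to 0
\end{equation*}
on $\mathbb{P}_{\mathbf{\Sigma}}$. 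Twisting by $\mathcal{O}(s_1+\sum_{i=1}^m w_i)$ and relabelling $J'=\{1,\ldots,m\}\setminus J$ yields the exact sequence
\begin{equation*}
0\to\mathcal{O}(s_1)\to\bigoplus_{|J'|=1}\mathcal{O}\bigl(s_1+\textstyle\sum_{j\in J'}w_j\bigr)\to\cdots\to\bigoplus_{|J'|=m-1}\mathcal{O}\bigl(s_1+\textstyle\sum_{j\in J'}w_j\bigr)\to\mathcal{O}\bigl(s_1+\textstyle\sum_{i=1}^m w_i\bigr)\to 0.
\end{equation*}

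In this sequence every term except the right-most is indexed by a proper subset $J'\subsetneq\{1,\ldots,m\}$ (the left-most corresponding to $J'=\emptyset$), and by Lemma \ref{move} each such line bundle is already an element of $\mathcal{T}\subset\mathcal{D}(\mathcal{T})$. Breaking the complex at its syzygy sheaves into short exact sequences and applying cone-closure of $\mathcal{D}(\mathcal{T})$ step by step from left to right therefore places $\mathcal{O}(s_1+\sum_{i=1}^m w_i)$ into $\mathcal{D}(\mathcal{T})$. The only point requiring any real justification is the exactness of the Koszul complex on the stack rather than on $\mathbb{A}^m\setminus\{0\}$, but this is a routine descent argument; the conceptual weight of the proof is already carried by Lemma \ref{move}.
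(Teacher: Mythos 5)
Your proof is correct and follows essentially the same route as the paper: both tensor the Koszul complex of the coordinate sections by $\mathcal{O}(s_1+\sum_{i=1}^m w_i)$, observe via Lemma \ref{move} that every term except the last lies in $\mathcal{T}$, and conclude by cone-closure of $\mathcal{D}(\mathcal{T})$. You merely spell out the splitting into short exact sequences and the exactness of the Koszul complex on the stack, which the paper takes for granted by citing \cite{BHKing}.
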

\begin{proof}
We consider the Koszul complex \cite{BHKing}
$$0 \rightarrow \mathcal{O}(-\sum_{i=1}^mw_i) \rightarrow \cdots \rightarrow \bigoplus_{i=1}^m\mathcal{O}(-w_i)\rightarrow \mathcal{O} \rightarrow 0.$$
Then we tensor this complex by $\mathcal{O}(s_1+\sum_{i=1}^mw_i)$ and get
$$0 \rightarrow \mathcal{O}(s_1) \rightarrow \cdots \rightarrow \bigoplus_{i=1}^m\mathcal{O}(-\sum_{j\neq i}w_j+s_1)\rightarrow \mathcal{O}(s_1+\sum_{i=1}^mw_i) \rightarrow 0.$$
By Lemma \ref{move}, we have that $\mathcal{O}(s_1+\sum_{j\in J}w_j)$ is in $\mathcal{T}$ for any proper subset $J\subsetneqq \{1,2,\ldots,m\}$. Thus $\mathcal{O}(s_1+\sum_{i=1}^mw_i) \in \mathcal{D}(\mathcal{T})$.
\end{proof}

\begin{theorem}\label{main1}
Let $X=\mathbb{P}_{\mathbf{\Sigma}}$ be a Fano toric DM stack with $\mathrm{rank}(\mathrm{Pic} (\mathbb{P}_{\mathbf{\Sigma}}))=1$.
Assume $\mathcal{T}=\{\mathcal{O}(s_1),\ldots,\mathcal{O}(s_n)\}$ is a strong exceptional collection of maximal length. Then $\mathcal{T}$ is a full strong exceptional collection.
\end{theorem}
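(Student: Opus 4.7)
The plan is to show $\mathcal{D}(\mathcal{T}) = \mathbf{D}^b(coh(\mathbb{P}_{\mathbf{\Sigma}}))$ by iteratively shrinking $\mathcal{T}$ into a translate of the standard collection of Proposition \ref{stronginBH}. After indexing $\mathcal{T}$ so that $s_1 < s_2 < \cdots < s_n$ (possible by Remark \ref{indexed}), I would repeatedly apply the move of Lemma \ref{anotersec}: replace the smallest element $\mathcal{O}(s_1)$ by $\mathcal{O}(s_1 + \sum_{i=1}^m w_i)$. By Lemmas \ref{s_1+nnotin} and \ref{anotersec} the result is again a strong exceptional collection of maximal length (after reordering), and by Corollary \ref{s_1+n} the new element lies in $\mathcal{D}(\mathcal{T})$, so the updated collection $\mathcal{T}'$ satisfies $\mathcal{D}(\mathcal{T}') \subseteq \mathcal{D}(\mathcal{T})$. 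Iterating yields a sequence $\mathcal{T} = \mathcal{T}_0, \mathcal{T}_1, \mathcal{T}_2, \ldots$ of strong exceptional collections of maximal length, each contained in $\mathcal{D}(\mathcal{T})$.

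The heart of the argument is termination: after finitely many moves the collection should become a set of $n$ consecutive integers. To prove this I would introduce the monovariant
\[
\Phi(\mathcal{T}) \;=\; \sum_{j=1}^n (s_n - s_j) \;=\; n\, s_n - \sum_{j=1}^n s_j,
\]
which is a nonnegative integer bounded below by $\binom{n}{2}$, with equality precisely when $\{s_1,\ldots,s_n\}$ is a run of consecutive integers. A short case split, according to whether $s_1 + \sum_i w_i < s_n$ or $s_1 + \sum_i w_i > s_n$ (equality being excluded by Lemma \ref{s_1+nnotin}), shows that under a move $\Phi$ either drops by $\sum_i w_i$ (first case) or changes by $n(s_1 + \sum_i w_i - 1 - s_n) \leq 0$ (second case, using the elementary bound $s_n - s_1 \geq n - 1$). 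Strict decrease fails only at the consecutive configuration, so after finitely many moves we reach $\mathcal{T}_k = \{\mathcal{O}(c), \mathcal{O}(c+1), \ldots, \mathcal{O}(c+n-1)\}$ for some integer $c$.

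To conclude, observe that $\mathcal{T}_k$ is the image of the standard full strong exceptional collection $\{\mathcal{O}(-n+1),\ldots,\mathcal{O}(0)\}$ of Proposition \ref{stronginBH} under the autoequivalence $-\otimes\mathcal{O}(c+n-1)$ of $\mathbf{D}^b(coh(\mathbb{P}_{\mathbf{\Sigma}}))$, so $\mathcal{T}_k$ also generates the whole derived category. Combined with $\mathcal{D}(\mathcal{T}_k) \subseteq \mathcal{D}(\mathcal{T}) \subseteq \mathbf{D}^b(coh(\mathbb{P}_{\mathbf{\Sigma}}))$, this forces $\mathcal{D}(\mathcal{T}) = \mathbf{D}^b(coh(\mathbb{P}_{\mathbf{\Sigma}}))$, so $\mathcal{T}$ is a full strong exceptional collection.

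I expect the only genuinely non-routine piece to be identifying the correct monovariant; in particular the "width" $s_n - s_1$ is not by itself monotone under moves, since a single move can push the maximum up (when $s_1 + \sum_i w_i > s_n$) while also pushing the minimum up. The invariant $\Phi$ correctly weights the contributions and reduces termination to a one-line induction, while the rest of the proof is simply an orchestration of the lemmas already established.
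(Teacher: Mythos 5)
Your proposal is correct and follows the same strategy as the paper: iterate the move of Lemmas \ref{s_1+nnotin}, \ref{anotersec} and Corollary \ref{s_1+n} on the smallest element until the collection becomes a run of consecutive integers, which is a twist of the collection of Proposition \ref{stronginBH}. The only place you diverge is the termination argument, and there your extra machinery is sound but not needed. The paper never performs a move when $s_1+\sum_i w_i>s_n$: in that case $s_n-s_1<\sum_i w_i=n$, and since the $n$ values $s_1<\cdots<s_n$ are distinct integers this pigeonhole already forces $s_n-s_1=n-1$, i.e.\ the consecutive configuration, so one simply stops. Consequently the width $s_n-s_1$ \emph{is} strictly decreasing on every move that is actually performed (new minimum $s_2>s_1$, maximum unchanged since $s_1+\sum_i w_i<s_n$, strict because Lemma \ref{s_1+nnotin} rules out equality), and your concern that a move could push the maximum up never arises. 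Your monovariant $\Phi=\sum_j(s_n-s_j)$, with the verification that it is bounded below by $\binom{n}{2}$ and strictly decreases except at the consecutive configuration, is a correct alternative that lets you keep applying the move unconditionally; it buys uniformity of the iteration at the cost of a computation that the paper's case split renders unnecessary.
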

\begin{proof}
Without loss of generality, we assume $s_1<s_2<\cdots<s_n$.
If $s_1+\sum_{i=1}^mw_i > s_n$, then $\sum_{i=1}^mw_i>s_n-s_1$. Then $(s_1,\ldots,s_n)=(s_1,s_1+1,\ldots,s_1+\sum_{i=1}^mw_i)$. So $\mathcal{T}$ is a twist of the collection of \cite{BHKing} and is therefore full.
If $s_1+\sum_{i=1}^mw_i\leq s_n$, we get a new strong exceptional collection $$\mathcal{T}^1=\{\mathcal{O}(s_2),\ldots,\mathcal{O}(s_1+\sum_{i=1}^mw_i),\ldots,\mathcal{O}(s_n)\}$$ in $\mathcal{D}(\mathcal{T})$ by Lemma \ref{anotersec} and Corollary \ref{s_1+n}.

\smallskip

This process decreases $s_n-s_1$ and therefore terminates. So eventually we will be in the situation $s_1+\sum_{i=1}^mw_i>s_n$.
\end{proof}

\begin{remark}
When $\mathrm{Pic} (\mathbb{P}_{\mathbf{\Sigma}})$ has torsion, the arguments go without significant changes. The details are left to the reader.
\end{remark}
\section{The case of $\mathrm{rk}(\mathrm{Pic} (\mathbb{P}_{\mathbf{\Sigma}}))=2$}\label{2}
In this section, we consider Fano toric Deligne-Mumford stack $\mathbb{P}_{\mathbf{\Sigma}}$ associated to a stacky fan $\mathbf{\Sigma}=(\Sigma, \{v_i\}_{i=1}^{m})$ in the lattice $N$ with $\mathrm{rk}(N)=m-2$.  In this case, the rank of Picard group $\mathrm{rk}(\mathrm{Pic} (\mathbb{P}_{\mathbf{\Sigma}}))$ equals $2$.
Our aim is to prove Conjecture \ref{anyrank} in this case. We first assume that
$\mathrm{Pic} (\mathbb{P}_{\mathbf{\Sigma}})$ has no torsion for ease of exposition.

\smallskip

We recall the results of \cite{BHKing}.
\begin{proposition}\label{alpha}
\cite{BHKing} There exists a unique up to scaling collection of rational numbers $\alpha_i$ such that $\sum_{i=1}^m\alpha_i=0$ and $\sum_{i=1}^m\alpha_iv_i=0$. Moreover, all $\alpha_i$ in this relation are nonzero.
\end{proposition}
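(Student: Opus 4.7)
The plan is to convert the two conditions $\sum_i\alpha_iv_i=0$ and $\sum_i\alpha_i=0$ into the kernel of a single linear map, use the Fano hypothesis (the origin is interior to $\mathrm{conv}(v_i)$) to do the dimension count for existence and uniqueness, and then rule out $\alpha_{i_0}=0$ by a facet-counting argument on the associated simplicial polytope.

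First, I would set up the linear map $\Phi:\mathbb{Q}^m\to N_{\mathbb{Q}}\oplus\mathbb{Q}$, $e_i\mapsto(v_i,1)$, whose kernel is exactly the set of tuples $(\alpha_i)$ satisfying both conditions. Since $\mathbb{P}_{\mathbf{\Sigma}}$ is Fano, the origin is in the interior of $\mathrm{conv}(v_1,\ldots,v_m)$; hence the $v_i$ affinely span $N_{\mathbb{R}}$, $\Phi$ is surjective, and $\dim_{\mathbb{Q}}\ker\Phi = m-(m-2)-1 = 1$. This gives both existence of a nonzero relation and uniqueness up to scaling.

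For the nonvanishing, I would argue by contradiction: suppose $\alpha_{i_0}=0$. Then $(\alpha_i)_{i\neq i_0}$ is a nontrivial affine dependency among the $m-1$ points $\{v_i:i\neq i_0\}$, so these points do not affinely span $N_{\mathbb{R}}$. On the other hand, the full collection affinely spans $N_{\mathbb{R}}$ (of dimension $m-2$), and $v_{i_0}$ alone can enlarge the affine span by at most one dimension; so the points $\{v_i:i\neq i_0\}$ must affinely span an affine hyperplane $H\subset N_{\mathbb{R}}$ of dimension exactly $m-3$, and $v_{i_0}\notin H$.

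Set $P=\mathrm{conv}(v_1,\ldots,v_m)$ and $B=\mathrm{conv}\{v_i:i\neq i_0\}\subset H$. Since $v_{i_0}$ lies strictly on one side of $H$, the hyperplane $H$ is a supporting hyperplane of $P$, so $B = P\cap H$ is a facet of $P$ of dimension $m-3$. The vertices of $B$ are precisely the vertices of $P$ that lie in $H$, which by the Fano hypothesis are all of the $v_i$ with $i\neq i_0$; hence $B$ has $m-1$ vertices. But $P$ is simplicial, so each of its facets is a simplex, forcing $B$ to be an $(m-3)$-simplex with exactly $m-2$ vertices, a contradiction. I expect the only delicate point to be the dimension bookkeeping and the verification that $H$ is a supporting hyperplane of $P$; once these are in place the argument closes immediately.
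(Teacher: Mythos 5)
Your proof is correct. Note that the paper itself gives no argument here --- Proposition \ref{alpha} is simply cited from \cite{BHKing} --- so there is no in-paper proof to compare against; your write-up is a valid self-contained substitute. The existence and uniqueness part is the standard dimension count: the relations in question form the kernel of $e_i\mapsto(v_i,1)$, and surjectivity of that map follows because the $v_i$ affinely span $N_{\mathbb{R}}$ --- just be aware that this really uses completeness of the fan (the $v_i$ positively span $N_{\mathbb{R}}$, so $0$ is interior to their convex hull), which is a standing hypothesis alongside the Fano condition as the paper states it. The nonvanishing argument is the right use of the two ingredients of the Fano definition: that the $v_i$ are \emph{precisely} the vertices of $P$ (so the face $P\cap H$ picks up all $m-1$ of the $v_i$ with $i\neq i_0$) and that $P$ is \emph{simplicial} (so a facet can have only $m-2$ vertices). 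The only steps worth making explicit are that $(\alpha_i)_{i\neq i_0}$ is not identically zero (immediate, since $\alpha\neq 0$) and that $P\cap H$ is exactly $\mathrm{conv}\{v_i: i\neq i_0\}$, which follows from the fact that the vertices of the exposed face $P\cap H$ are the vertices of $P$ lying in $H$. With those remarks the argument is complete.
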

\begin{proof}
See \cite{BHKing}.
\end{proof}
We pick one such relation $\sum_{i=1}^m\alpha_iv_i=0$. Let $I_{+}=\{i|\alpha_i>0\}$ and $I_{-}=\{i|\alpha_i>0\}$. Then we have $\{1,\ldots,m\}=I_{+}\sqcup I_{-}$.
Let $E_{+}=\sum_{i\in I_{+}}(E_i)$ and $E_{-}=\sum_{i\in I_{-}}(E_i)$.
We consider a linear function $\alpha$ on $\mathrm{Pic}_{\mathbb{R}} (\mathbb{P}_{\mathbf{\Sigma}})$ with $\alpha(E_i)=\alpha_i$ from Proposition \ref{alpha}. Then $\alpha(E_{+})+\alpha(E_{-})=0$.

\smallskip

Moreover, from \cite{BHKing}, we can pick and fix a collection of positive numbers $r_i$, $i=1,\ldots,m$ such that $\sum_ir_i=1$ and $\sum_ir_iv_i=0$. This collection of positive numbers gives a linear function $f$ on $\mathrm{Pic}_{\mathbb{R}} (\mathbb{P}_{\mathbf{\Sigma}})$ with $f(E_i)=r_i>0$.

\smallskip

Let $P$ be a parallelogram in $\mathrm{Pic}_{\mathbb{R}} (\mathbb{P}_{\mathbf{\Sigma}})$ given by $$|f(x)|\leq \frac{1}{2}, |\alpha(x)|\leq \frac{1}{2}\sum_{i\in I_+}\alpha_i.$$ Pick a generic point $p\in \mathrm{Pic}_{\mathbb{R}} (\mathbb{P}_{\mathbf{\Sigma}})$ so that the lines along the sides of the parallelogram $p+P$ do not contain any points from $\mathrm{Pic}_{\mathbb{Q}} (\mathbb{P}_{\mathbf{\Sigma}})$. Then we have the following.

\begin{proposition}\label{p+P}
\cite{BHKing} The set $S$ of line bundles in $p+P$ forms a full strong exceptional collection on $\mathbb{P}_{\mathbf{\Sigma}}$.
\end{proposition}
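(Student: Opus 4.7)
The plan is to verify three things for the set $S$: that $|S|$ equals $\mathrm{rk}(K_0(\mathbb{P}_{\mathbf{\Sigma}}))$, that the Ext-vanishing condition of Remark \ref{indexed} holds for every pair in $S$, and that $S$ generates $\mathbf{D}^b(\mathrm{coh}(\mathbb{P}_{\mathbf{\Sigma}}))$. The first two together make $S$ into a strong exceptional collection of maximal length, and fullness then requires one additional resolution argument.

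First I would compute $|S|$. Since $p$ is chosen so that the sides of $p+P$ avoid $\mathrm{Pic}(\mathbb{P}_{\mathbf{\Sigma}})$, the count is the index-normalized area of $P$ with respect to the two linear forms $f$ and $\alpha$. The $f$-width of $P$ is $1$ and the $\alpha$-width is $\sum_{i\in I_+}\alpha_i$, and a direct computation using these normalizations should match $\mathrm{rk}(K_0(\mathbb{P}_{\mathbf{\Sigma}}))$, which is the number of maximal cones of $\Sigma$. Next, for the strong exceptional property, I would take $\mathcal{L}_1,\mathcal{L}_2\in S$ and observe that $D=\mathcal{L}_2-\mathcal{L}_1$ lies in $P-P$, so $|f(D)|<1$ and $|\alpha(D)|<\sum_{i\in I_+}\alpha_i$. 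Using the standard cohomology formula for line bundles on toric DM stacks, expressing $H^t(\mathcal{O}(D))$ as a sum of reduced cohomologies of subcomplexes of $\Sigma$ indexed by characters in $\mathrm{Pic}(\mathbb{P}_{\mathbf{\Sigma}})$, I would show that any $D$ with nonzero $\mathrm{Ext}^t$ for some $t>0$ admits a representation $D=\sum_i c_i E_i$ whose sign pattern on the $c_i$ forces either $|f(D)|\geq 1$ or $|\alpha(D)|\geq\sum_{i\in I_+}\alpha_i$. The widths of $P$ are calibrated precisely to exclude both possibilities.

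For fullness, note that once the first two steps are done Lemma \ref{basis} shows that $S$ is a $K_0$-basis, but this alone does not yield generation in $\mathbf{D}^b$. I would apply the Koszul resolution on the rays of $\Sigma$, together with a second resolution coming from the primitive relation $\sum\alpha_i v_i=0$, to show that any line bundle $\mathcal{O}(D)$ lies in the triangulated hull of line bundles whose Picard classes are brought successively closer to $p+P$. Iterating in the two independent directions picked out by $f$ and $\alpha$ pushes arbitrary Picard classes into $p+P$; combined with the standard fact that line bundles generate $\mathbf{D}^b(\mathrm{coh}(\mathbb{P}_{\mathbf{\Sigma}}))$, this proves fullness.

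The main obstacle is the second step: one must identify the sharp combinatorial criterion for $H^t(\mathbb{P}_{\mathbf{\Sigma}},\mathcal{O}(D))\neq 0$ in Picard rank two and show that this criterion forces $D$ to violate one of the two inequalities defining $P-P$. The normalizations $\sum_i r_i=1$ and $\alpha(E_+)=\sum_{i\in I_+}\alpha_i$ appearing in the definition of $P$ are strong hints that $f$ and $\alpha$ together capture all possible obstructions, but verifying this rigorously requires a careful case analysis of the fan combinatorics in Picard rank two.
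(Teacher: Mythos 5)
The paper does not actually prove this proposition: it is quoted verbatim from \cite{BHKing} and the ``proof'' is the citation. Your outline does follow the broad architecture of the argument in that reference (count the lattice points of $\mathrm{Pic}$ in $p+P$, check $\mathrm{Ext}^{>0}$-vanishing for differences lying in $P-P$, and establish fullness by Koszul-type resolutions that push arbitrary line bundles into the triangulated hull of $S$), so the strategy is sound. However, as a proof it has a genuine gap and one outright error.

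The error: you assert that $\mathrm{rk}(K_0(\mathbb{P}_{\mathbf{\Sigma}}))$ is the number of maximal cones of $\Sigma$. That is true for smooth toric \emph{varieties} but false for stacks: the rank is the sum over maximal cones $\sigma$ of the index $[N:\sum_{i\in\sigma}\mathbb{Z}v_i]$ (see \cite{BH}). The paper's own rank-one example makes this concrete: for $W\mathbb{P}(5,6)$ the fan has two maximal cones but $\mathrm{rk}(K_0)=11$. Your ``index-normalized area'' phrasing could be made to give the right count, but as written the identification you would be verifying is wrong. The gap: the entire content of the strong-exceptionality step is the claim that nonvanishing of some $\mathrm{Ext}^{t}$, $t>0$, forces $|f(D)|\geq 1$ or $|\alpha(D)|\geq\sum_{i\in I_+}\alpha_i$, and you explicitly defer this (``the main obstacle \ldots requires a careful case analysis''). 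This is exactly the nontrivial input: one must know that for a complete simplicial Fano fan with $m=\mathrm{rk}(N)+2$ rays the only nonzero higher cohomology of $\mathcal{O}(D)$ occurs for $D=\sum_i(<0)E_i$ (top degree) or for the two mixed sign patterns $\sum_{I_\mp}(<0)E_i+\sum_{I_\pm}(\geq 0)E_i$ --- precisely the criteria displayed in Section \ref{2} of this paper. Once those are granted, the check is a two-line computation with $f$ and $\alpha$; without them, nothing has been proved. Since establishing that classification of nonvanishing cohomology is the heart of the Borisov--Hua proof, the proposal as it stands is an accurate roadmap rather than a proof.
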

\begin{proof}
See \cite{BHKing}.
\end{proof}

{\bf Notation:} The following notations will be used in our arguments. Let $\mathcal{T}=\{\mathcal{O}(D_1),\ldots,\mathcal{O}(D_n)\}$ be a collection of line bundles, we will abuse the notation slightly and denote by
$\mathrm{max}(\alpha(\mathcal{T}))$ the maximum value of $\alpha(D_i)$ for $O(D_i)$ in $\mathcal{T}$ (and similarly, for $\mathrm{min}$ and $f$). We denote $\mathcal{T}_{\mathrm{min}(f)}=\{D_i\in \mathcal{T}| f(D_i)=\mathrm{min}(f(\mathcal{T}))\}$.

\medskip

{\bf Main idea.} The idea of the proof is similar to the case $\mathrm{rk}(\mathrm{Pic} (\mathbb{P}_{\mathbf{\Sigma}}))=1$.
Starting from an exceptional collection $\mathcal{T}$ of line bundles of maximal length, we construct other exceptional collections
of maximal length in $\mathcal{D}(\mathcal{T})$, the subcategory generated by elements in $\mathcal{T}$. Eventually, we get to the exceptional collection in Proposition \ref{p+P}.

\medskip

{\bf Step $1$.} The first step is to "move" the largest elements in terms of the linear function $\alpha$ in the strong exceptional collection by $-E_+$ or $E_-$ to construct a new strong exceptional collection in $\mathcal{D}(\mathcal{T})$, see Figure \ref{fig:3}.
\begin{figure}[H]
  \includegraphics[width=0.48\textwidth]{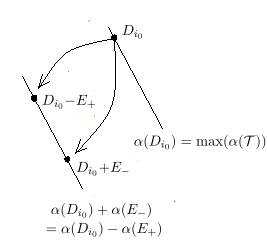}
  \caption{}
  \label{fig:3}
\end{figure}

\smallskip

Specifically: let $\mathcal{T}=(\mathcal{O}(D_1),\ldots,\mathcal{O}(D_n))$ be a strong exceptional collection of line bundles of maximal length. We pick $i_0\in \{1,\ldots,n\}$ such that $\alpha(D_{i_0})=\mathrm{max}(\alpha(\mathcal{T}))$.
Then
\begin{enumerate}
\item Both $\mathcal{O}(D_{i_0}-E_+)$ and $\mathcal{O}(D_{i_0}+E_-)$ are not in the strong exceptional collection $\mathcal{T}$ (Lemma \ref{E+E-notin});
\item Either replacing $\mathcal{O}(D_{i_0})$ with $\mathcal{O}(D_{i_0}-E_+)$ or with $\mathcal{O}(D_{i_0}+E_-)$, we get another strong exceptional collection after reordering (Lemma \ref{JL}, Lemma \ref{newstrong+} and Lemma \ref{newstrong-});
\item The new exceptional collection in $(2)$ is in $\mathcal{D}(\mathcal{T})$ (Lemma \ref{+in} and Lemma \ref{-in}).
\end{enumerate}

By repeating the above step (Theorem \ref{smallwidth}), we can reduce the problem to the strong exceptional collection $\mathcal{S}$ in $\mathcal{D}(\mathcal{T})$
such that all the line bundles in $\mathcal{S}$ are within a strip of width less than $\alpha(E_+)$, i.e., $\mathrm{max}({\alpha(\mathcal{S})})-\mathrm{min}(\alpha(\mathcal{S}))<\alpha(E_+)=\alpha(-E_-)$.

\smallskip

{\bf Step $2$.} From now on, we consider a strong exceptional collection $\mathcal{T}=(\mathcal{O}(D_1),\ldots,\mathcal{O}(D_n))$ of maximal length within a strip of width less than $\alpha(E_+)$. If $\mathrm{max}(f(\mathcal{T}))-\mathrm{min}(f(\mathcal{T}))<f(E_++E_-)=1$, then $\mathcal{T}$ is a full strong exceptional collection in
Proposition \ref{p+P}. This allows us to conclude that $\mathcal{D}(\mathcal{T})=\mathbf{D}^b(coh(\mathbb{P}_{\mathbf{\Sigma}}))$.

\smallskip

 Now, we assume $\mathrm{max}(f(\mathcal{T}))-\mathrm{min}(f(\mathcal{T}))\geq f(E_++E_-)=1$.
 We pick $j_0\in\{1,\ldots,n\}$ such that $\alpha(D_{j_0})=\mathrm{max}(\alpha(\mathcal{T}))$.
Then we can replace $\mathcal{O}(D_{i_0})$ with $\mathcal{O}(D_{i_0}-E_+)$ or $\mathcal{O}(D_{i_0}+E_-)$ to get another strong exceptional collection $\mathcal{T}'$ such that (Proposition \ref{move1}):
\begin{enumerate}
\item $\mathrm{max}(f(\mathcal{T}'))\leq \mathrm{max}(f(\mathcal{T}))$;
 \item  $\mathrm{min}(f(\mathcal{T}'))\geq\mathrm{min}(f(\mathcal{T}))$;
\item $\sharp(\mathcal{T}'_{\mathrm{min}(f)})\leq \sharp(\mathcal{T}_{\mathrm{min}(f)})$ if $\mathrm{min}(f(\mathcal{T}'))= \mathrm{min}(f(\mathcal{T}))$;
\item $\sharp(\{D_i\in \mathcal{T}'| f(D_i)=\mathrm{min}(f(\mathcal{T}))\})< \sharp(\mathcal{T}_{\mathrm{min}(f)})$ if $f(D_{i_0})=\mathrm{min}(f(\mathcal{T}))$.
\end{enumerate}

By repeating the above step (Theorem \ref{main2}), we get a new strong exceptional collection $\mathcal{S}$ such that $\mathrm{max}({\alpha(\mathcal{S})})-\mathrm{min}({\alpha(\mathcal{S})})<\alpha(E_+)=\alpha(-E_-)$ and $\mathrm{max}({f(\mathcal{S})})-\mathrm{min}({f(\mathcal{S})})<f(E_++E_-)=1$ which is one
 in Proposition \ref{p+P}. This allows us to conclude that $\mathcal{D}(\mathcal{T})=\mathbf{D}^b(coh(\mathbb{P}_{\mathbf{\Sigma}}))$.

\medskip

{\bf Details of proof.}
For a divisor class $D$ in $\mathrm{Pic} (\mathbb{P}_{\mathbf{\Sigma}})$, we write $D=\sum_{i\in I}(\geq 0)E_i$ if $D$ can be written as $D=\sum_{i\in I}a_iE_i$ with $a_i \in \mathbb{Z}_{\geq 0}$ for all $i$ in a subset $I\subseteq \{1,\ldots,m\}$. We use similar notation for other inequalities.

\smallskip

The nonzero $\mathrm{Ext}$ groups between line bundles have been calculated in \cite{BHKing}. We denote by $\mathrm{Ext}^{+}$, $\mathrm{Ext}^{-}$ the groups associated to sets $I_+$, $I_-$. Specifically,
for any $D_1, D_2\in \mathrm{Pic} (\mathbb{P}_{\mathbf{\Sigma}})$, we have
\begin{equation*}
\begin{split}
&\mathrm{Ext}^{\mathrm{rk}(N)}(\mathcal{O}(D_1),\mathcal{O}(D_2))\neq 0 \Leftrightarrow D_2-D_1=\sum_{i\in \{1,\ldots,m\}}(<0)E_i;\\
&\mathrm{Ext}^{+}(\mathcal{O}(D_1),\mathcal{O}(D_2))\neq 0 \Leftrightarrow D_2-D_1=\sum_{i\in I_-}(< 0)E_i+\sum_{i\in I_+}(\geq 0)E_i; \\
&\mathrm{Ext}^{-}(\mathcal{O}(D_1),\mathcal{O}(D_2))\neq 0 \Leftrightarrow D_2-D_1=\sum_{i\in I_+}(< 0)E_i+\sum_{i\in I_-}(\geq 0)E_i;\\
&\mathrm{Hom}(\mathcal{O}(D_1),\mathcal{O}(D_2))\neq 0 \Leftrightarrow D_2-D_1=\sum_{i\in\{1,\ldots,m\}}(\geq 0)E_i.
\end{split}
\end{equation*}

\begin{lemma}\label{E+E-notin}
Let $\mathcal{T}=(\mathcal{O}(D_1),\ldots,\mathcal{O}(D_n))$ be a strong exceptional collection of line bundles on $\mathbb{P}_{\mathbf{\Sigma}}$.
If $i_0\in \{1,\ldots,n\}$, then both $\mathcal{O}(D_{i_0}-E_+)$ and $\mathcal{O}(D_{i_0}+E_-)$ are not in $\mathcal{T}$.
\end{lemma}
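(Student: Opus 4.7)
The plan is to assume toward a contradiction that one of the two line bundles $\mathcal{O}(D_{i_0}\pm E_\mp)$ lies in $\mathcal{T}$, and then exhibit an explicit nonzero $\mathrm{Ext}^t$ with $t>0$ between two members of $\mathcal{T}$. Since a strong exceptional collection forces $\mathrm{Ext}^t(\mathcal{L}_a,\mathcal{L}_b)=0$ for all $t\neq 0$ and every pair in $\mathcal{T}$ (cf.\ Remark \ref{indexed}), either outcome will be a contradiction. The whole argument should reduce to a direct look-up in the $\mathrm{Ext}$ table from \cite{BHKing} recalled just before the lemma, in the same spirit as Lemma \ref{s_1+nnotin} in the Picard rank one case.

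For the first half, I will suppose $\mathcal{O}(D_{i_0}-E_+)=\mathcal{O}(D_j)$ for some $j\in\{1,\ldots,n\}$ (note $j\neq i_0$ since $E_+\neq 0$). Then the class difference $D_j-D_{i_0}=-E_+$ has coefficient $-1$ on every $E_i$ with $i\in I_+$ and coefficient $0$ on every $E_i$ with $i\in I_-$. This fits exactly the shape $\sum_{i\in I_+}(<0)E_i+\sum_{i\in I_-}(\geq 0)E_i$, so the table forces $\mathrm{Ext}^{-}(\mathcal{O}(D_{i_0}),\mathcal{O}(D_j))\neq 0$, which is the desired contradiction.

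For the second half, I will suppose $\mathcal{O}(D_{i_0}+E_-)=\mathcal{O}(D_j)$. Here it is the \emph{reverse} class difference that is relevant: $D_{i_0}-D_j=-E_-$ has coefficient $-1$ on every $E_i$ with $i\in I_-$ and coefficient $0$ on every $E_i$ with $i\in I_+$, matching the shape $\sum_{i\in I_-}(<0)E_i+\sum_{i\in I_+}(\geq 0)E_i$. The table then forces $\mathrm{Ext}^{+}(\mathcal{O}(D_j),\mathcal{O}(D_{i_0}))\neq 0$, again contradicting strong exceptionality.

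I do not anticipate a genuine obstacle in this lemma; the only thing to be careful about is picking the correct direction of the arrow ($\mathrm{Ext}(\mathcal{O}(D_{i_0}),\mathcal{O}(D_j))$ in the first case and $\mathrm{Ext}(\mathcal{O}(D_j),\mathcal{O}(D_{i_0}))$ in the second) so that the class difference lands in one of the two higher $\mathrm{Ext}^{\pm}$ patterns rather than in the $\mathrm{Hom}$ pattern $\sum(\geq 0)E_i$, which would not yield a contradiction.
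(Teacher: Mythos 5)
Your proof is correct and follows essentially the same route as the paper: in each case you read off the class difference in the correct direction and match it against the $\mathrm{Ext}^{\mp}$ patterns from the table of \cite{BHKing}, obtaining $\mathrm{Ext}^{-}(\mathcal{O}(D_{i_0}),\mathcal{O}(D_{i_0}-E_+))\neq 0$ and $\mathrm{Ext}^{+}(\mathcal{O}(D_{i_0}+E_-),\mathcal{O}(D_{i_0}))\neq 0$, exactly as the paper does. Your closing caution about choosing the right direction of the arrow is precisely the one nontrivial point, and you handle it correctly.
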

\begin{proof}
If $\mathcal{O}(D_{i_0}-E_+)\in \mathcal{T}$, we have $\mathrm{Ext}^{-}(\mathcal{O}(D_{i_0}), \mathcal{O}(D_{i_0}-E_+))\neq 0$ since $D_{i_0}-E_+-D_{i_0}=-E_+$.
If $\mathcal{O}(D_{i_0}+E_-)\notin \mathcal{T}$, we have $\mathrm{Ext}^{+}(\mathcal{O}(D_{i_0}+E_-), \mathcal{O}(D_{i_0})\neq 0$ since $D_{i_0}-D_{i_0}-E_-=-E_-$.
These contradict that $\mathcal{T}$ is a strong exceptional collection.
\end{proof}
For any subset $I\subseteq \{1,\ldots,m\}$, we denote $E_{I}=\sum_{i\in I}E_i$.

\begin{lemma}\label{J}
Let $\mathcal{T}=\{\mathcal{O}(D_1),\ldots,\mathcal{O}(D_n)\}$ be a strong exceptional collection of line bundles on $\mathbb{P}_{\mathbf{\Sigma}}$.
We pick $i_0\in \{1,\ldots,n\}$ such that $\alpha(D_{i_0})=\mathrm{max}(\alpha(\mathcal{T}))$. Then for any proper subset $J$ of $I_{+}$ and any $k \in \{1,\ldots,n\}$, we have
\begin{equation*}
\begin{split}
&\mathrm{Ext}^*(\mathcal{O}(D_{i_0}-E_{J}), \mathcal{O}(D_k))=0, \text{ where }*=\mathrm{rk}(N),+,-;\\
&\mathrm{Ext}^*(\mathcal{O}(D_k), \mathcal{O}(D_{i_0}-E_{J}))=0, \text{ where }*=+,-.\\
\end{split}
\end{equation*}
\end{lemma}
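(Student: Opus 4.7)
The plan is to prove each of the five Ext-vanishing statements by contradiction, using the explicit criteria for non-vanishing Ext groups recorded just above the lemma. In every case I would start from a putative ``bad'' expression for $D_k-(D_{i_0}-E_J)$ or $(D_{i_0}-E_J)-D_k$ and aim to contradict either the strong exceptionality of $\mathcal{T}$ (by producing a bad expression for $D_k-D_{i_0}$ or $D_{i_0}-D_k$ itself) or the choice of $i_0$ as a maximizer of $\alpha$. The key numerical input is the strict inequality $\alpha(E_J)<\alpha(E_+)$, which follows from $J\subsetneqq I_+$.

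First I would handle three of the five cases by a direct sign-chase. For $\mathrm{Ext}^{\mathrm{rk}(N)}(\mathcal{O}(D_{i_0}-E_J),\mathcal{O}(D_k))$, subtracting $E_J$ from an all-negative putative expression leaves every coefficient strictly negative, producing an all-negative expression for $D_k-D_{i_0}$ and hence contradicting $\mathrm{Ext}^{\mathrm{rk}(N)}(\mathcal{O}(D_{i_0}),\mathcal{O}(D_k))=0$. For $\mathrm{Ext}^-(\mathcal{O}(D_{i_0}-E_J),\mathcal{O}(D_k))$, subtracting $E_J$ only pushes the $I_+$ coefficients further below zero and leaves the $I_-$ coefficients alone, preserving the pattern ``$<0$ on $I_+$, $\geq 0$ on $I_-$''. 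Symmetrically, for $\mathrm{Ext}^+(\mathcal{O}(D_k),\mathcal{O}(D_{i_0}-E_J))$, the extra $+E_J$ when passing back to $D_{i_0}-D_k$ only raises the $I_+$ coefficients, preserving the pattern required by $\mathrm{Ext}^+(\mathcal{O}(D_k),\mathcal{O}(D_{i_0}))\neq 0$.

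The remaining cases $\mathrm{Ext}^+(\mathcal{O}(D_{i_0}-E_J),\mathcal{O}(D_k))$ and $\mathrm{Ext}^-(\mathcal{O}(D_k),\mathcal{O}(D_{i_0}-E_J))$ are the main obstacle: shifting by $\pm E_J$ on indices in $J\subseteq I_+$ can push the affected coefficients across zero and destroy the sign pattern needed for a direct contradiction. For these I would instead apply the linear functional $\alpha$ to the assumed bad expression. For the first, starting from
\[
D_k - D_{i_0} + E_J \;=\; \sum_{i\in I_-} a_i E_i + \sum_{i\in I_+} b_i E_i, \qquad a_i\le -1,\ b_i\ge 0,
\]
each $a_i\alpha_i\ge|\alpha_i|$ for $i\in I_-$ and each $b_i\alpha_i\ge 0$ for $i\in I_+$, so $\alpha$ of the right-hand side is at least $\sum_{i\in I_-}|\alpha_i|=\alpha(E_+)$. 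Rearranging gives $\alpha(D_k)\ge\alpha(D_{i_0})+\alpha(E_+)-\alpha(E_J)>\alpha(D_{i_0})$, because $J\subsetneqq I_+$ forces $\alpha(E_+)-\alpha(E_J)>0$, contradicting the maximality of $\alpha(D_{i_0})$. The $\mathrm{Ext}^-$ case is entirely symmetric: the same bookkeeping with signs reversed yields $\alpha(D_{i_0})<\alpha(D_k)$.

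The only subtle point in the whole argument is noticing which two cases cannot be dispatched by sign manipulation, and exploiting the strict inclusion $J\subsetneqq I_+$ to obtain the strict gap $\alpha(E_+)-\alpha(E_J)>0$ that closes the $\alpha$-argument. Everything else is bookkeeping of sign conditions on the coefficients $a_i$ and $b_i$.
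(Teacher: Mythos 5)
Your proposal is correct and follows essentially the same route as the paper's proof: the cases $\mathrm{Ext}^{\mathrm{rk}(N)}(\mathcal{O}(D_{i_0}-E_J),\mathcal{O}(D_k))$, $\mathrm{Ext}^{-}(\mathcal{O}(D_{i_0}-E_J),\mathcal{O}(D_k))$, and $\mathrm{Ext}^{+}(\mathcal{O}(D_k),\mathcal{O}(D_{i_0}-E_J))$ are dispatched by the identical sign-absorption of $E_J$, and the two remaining cases are handled, exactly as in the paper, by applying $\alpha$ and using the strict inequality $\alpha(E_J)<\alpha(E_+)$ coming from $J\subsetneqq I_+$ to contradict the maximality of $\alpha(D_{i_0})$.
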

\begin{proof}
$(1)$ We have $\mathrm{Ext}^{\mathrm{rk}(N)}(\mathcal{O}(D_{i_0}-E_{J}), \mathcal{O}(D_k))=0$. Otherwise, we get $D_k-D_{i_0}+E_{J}=\sum_{i\in \{1,\ldots,m\}}(<0)E_i$. Thus $D_k-D_{i_0}=\sum_{i\in \{1,\ldots,m\}}(<0)E_i-E_{J}=\sum_{i\in \{1,\ldots,m\}}(<0)E_i$. This implies $\mathrm{Ext}^{\mathrm{rk}(N)}(\mathcal{O}(D_{i_0}), \mathcal{O}(D_k))\neq 0$ which contradicts the assumption that $\mathcal{T}$ is a strong exceptional collection.

\smallskip

$(2)$  We have $\mathrm{Ext}^{+}(\mathcal{O}(D_{i_0}-E_{J}), \mathcal{O}(D_k))=0$. Otherwise, we get $D_k-D_{i_0}+E_{J}=\sum_{i\in I_-}(< 0)E_i+\sum_{i\in I_+}(\geq 0)E_i$. So $D_k-D_{i_0}=-E_--E_J+\sum_{i\in I_-}(\leq 0)E_i+\sum_{i\in I_+}(\geq 0)E_i$.
We have $\alpha(-E_-)=\alpha(E_+)>\alpha(E_J)$ since $J\subsetneqq I_{+}$. Also, $\alpha(\sum_{i\in I_-}(\leq 0)E_i)\geq 0$ and $\alpha(\sum_{i\in I_+}(\geq 0)E_i)\geq 0$. Thus $\alpha(D_k-D_{i_0})>0$ which contradicts the assumption that $\alpha(D_{i_0})=\mathrm{max}(\alpha(\mathcal{T}))$.

\smallskip

$(3)$ We have $\mathrm{Ext}^{-}(\mathcal{O}(D_{i_0}-E_{J}), \mathcal{O}(D_k))=0$. Otherwise, we have $D_k-D_{i_0}+E_{J}=\sum_{i\in I_+}(< 0)E_i+\sum_{i\in I_-}(\geq 0)E_i$. Thus $D_k-D_{i_0}=\sum_{i\in I_+}(< 0)E_i-E_J+\sum_{i\in I_-}(\geq 0)E_i=\sum_{i\in I_+}(< 0)E_i+\sum_{i\in I_-}(\geq 0)E_i$. This implies $\mathrm{Ext}^{-}(\mathcal{O}(D_{i_0}), \mathcal{O}(D_k))\neq 0$, contradiction.

\smallskip

$(4)$ We have $\mathrm{Ext}^+(\mathcal{O}(D_k), \mathcal{O}(D_{i_0}-E_{J}))=0$. Otherwise, we have $D_{i_0}-E_{J}-D_k=\sum_{i\in I_+}(\geq 0)E_i+\sum_{i\in I_-}(< 0)E_i$. Thus $D_{i_0}-D_k=\sum_{i\in I_+}(\geq 0)E_i-\sum_{i\in I_-}(< 0)E_i+E_J=\sum_{i\in I_+}(\geq 0)E_i-\sum_{i\in I_-}(< 0)E_i$. This implies $\mathrm{Ext}^{+}(\mathcal{O}(D_k), \mathcal{O}(D_{i_0}))\neq 0$, contradiction.

\smallskip

$(5)$ We have $\mathrm{Ext}^-(\mathcal{O}(D_k), \mathcal{O}(D_{i_0}-E_{J}))=0$. Otherwise, we have $D_{i_0}-E_{J}-D_k=\sum_{i\in I_+}(< 0)E_i+\sum_{i\in I_-}(\geq 0)E_i$. Thus $D_{i_0}-D_k=\sum_{i\in I_+}(< 0)E_i+E_J+\sum_{i\in I_-}(\geq 0)E_i$. We get $\alpha(\sum_{i\in I_+}(< 0)E_i)=\sum_{i\in I_+}(< 0)\alpha_i\leq \sum_{i\in I_+}(-1)\alpha_i< \sum_{i\in J}(-1)\alpha_i=\alpha(-E_J)$ since $J\subsetneqq I_{+}$. So $\alpha(\sum_{i\in I_+}(< 0)E_i+E_J)<0$. Also, $\alpha(\sum_{i\in I_-}(\geq 0)E_i)\leq 0$. This implies $\alpha(D_{i_0}-D_k)<0$ which contradicts the assumption that $\alpha(D_{i_0})=\mathrm{max}(\alpha(\mathcal{T}))$.
\end{proof}

\begin{lemma}\label{L}
Let $\mathcal{T}=\{\mathcal{O}(D_1),\ldots,\mathcal{O}(D_n)\}$ be a strong exceptional collection of line bundles on $\mathbb{P}_{\mathbf{\Sigma}}$.
We pick $i_0\in \{1,\ldots,n\}$ such that $\alpha(D_{i_0})=\mathrm{max}(\alpha(\mathcal{T}))$. Then for any proper subset $L$ of $I_{-}$ and any $j \in \{1,\ldots,n\}$, we have
\begin{equation*}
\begin{split}
&\mathrm{Ext}^*(\mathcal{O}(D_{i_0}+E_{L}), \mathcal{O}(D_j))=0, \text{ where }*=+,-;\\
&\mathrm{Ext}^*(\mathcal{O}(D_j), \mathcal{O}(D_{i_0}+E_{L}))=0, \text{ where }*=\mathrm{rk}(N),+,-;\\
\end{split}
\end{equation*}
\end{lemma}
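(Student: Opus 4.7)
The plan is to prove Lemma \ref{L} by mirroring the case-by-case structure of Lemma \ref{J} with the roles of $I_+$ and $I_-$ interchanged and $-E_J$ replaced by $+E_L$. There are five Ext-vanishing statements to verify. For each, I would assume for contradiction that the Ext group is nonzero, use the criterion for nonvanishing recalled just before Lemma \ref{E+E-notin} to rewrite $D_j - D_{i_0}$ or $D_{i_0} - D_j$ as a combination indexed by $I_+$ and $I_-$, absorb the $\pm E_L$ summand into that combination, and then derive a contradiction with either the strong exceptional property of $\mathcal{T}$ or with the maximality $\alpha(D_{i_0}) = \mathrm{max}(\alpha(\mathcal{T}))$.

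Three of the five cases should be purely combinatorial. For $\mathrm{Ext}^-(\mathcal{O}(D_{i_0}+E_L), \mathcal{O}(D_j))$, nonvanishing forces the coefficients of $E_i$ ($i\in I_-$) in $D_j - D_{i_0} - E_L$ to be $\geq 0$, so adding $E_L$ back (which only increases coefficients on $L \subset I_-$) still leaves them $\geq 0$, producing the nonvanishing criterion for $\mathrm{Ext}^-(\mathcal{O}(D_{i_0}), \mathcal{O}(D_j))$. Dually, for $\mathrm{Ext}^{\mathrm{rk}(N)}(\mathcal{O}(D_j),\mathcal{O}(D_{i_0}+E_L))$ and $\mathrm{Ext}^+(\mathcal{O}(D_j),\mathcal{O}(D_{i_0}+E_L))$, subtracting $E_L$ from a combination whose relevant coefficients are already strictly negative keeps them strictly negative, so the same Ext between $\mathcal{O}(D_j)$ and $\mathcal{O}(D_{i_0})$ becomes nonzero. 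All three contradictions are with the exceptional property of $\mathcal{T}$.

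The two remaining cases, $\mathrm{Ext}^+(\mathcal{O}(D_{i_0}+E_L), \mathcal{O}(D_j))$ and $\mathrm{Ext}^-(\mathcal{O}(D_j), \mathcal{O}(D_{i_0}+E_L))$, are the delicate ones: here the shift by $\pm E_L$ moves coefficients in the ``wrong'' direction and cannot be absorbed directly. For these I would apply the linear functional $\alpha$. The key inputs are the strict inequality $|\alpha(E_L)| < \alpha(E_+)$ (which holds precisely because $L \subsetneqq I_-$) together with the bound $\alpha(\sum_{i\in I_-}(<0)E_i) \geq \alpha(E_+)$ and its dual $\alpha(\sum_{i\in I_+}(<0)E_i) \leq -\alpha(E_+)$, plus $\alpha(\sum_{i\in I_+}(\geq 0)E_i)\geq 0$ and $\alpha(\sum_{i\in I_-}(\geq 0)E_i)\leq 0$. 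Combining these yields $\alpha(D_j - D_{i_0}) > 0$ in the first case and $\alpha(D_{i_0} - D_j) < 0$ in the second, each contradicting the maximality of $\alpha(D_{i_0})$.

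The main obstacle I anticipate is not conceptual but bookkeeping: one must track carefully whether, after the $\pm E_L$ shift, the coefficients of each $E_i$ end up strictly negative, nonpositive, or nonnegative, and ensure the strict inequality $|\alpha(E_L)| < \alpha(E_+)$ is used exactly where needed (this is the single place where the hypothesis $L \subsetneqq I_-$ enters, so losing it would collapse the argument). Apart from this sign-chasing, the proof is entirely parallel to that of Lemma \ref{J}.
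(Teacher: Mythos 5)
Your proposal is correct and is exactly the argument the paper intends: the paper's proof of Lemma \ref{L} simply declares it analogous to Lemma \ref{J} and leaves the details to the reader, and you have correctly identified which three of the five vanishings are purely combinatorial (absorbing $\pm E_L$ into the coefficient pattern) and which two require the functional $\alpha$ together with the strict inequality $|\alpha(E_L)|<\alpha(E_+)$ coming from $L\subsetneqq I_-$. No gaps.
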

\begin{proof}
The proof is analogous to the proof of Lemma \ref{J} and is left to the reader.
\end{proof}
Note that Lemmas \ref{J}, \ref{L} only cover vanishing of five out of possible six $\mathrm{Ext}^{>0}$ spaces. The next Lemma addresses the remaining space.
\begin{lemma}\label{JL}
Let $\mathcal{T}=\{\mathcal{O}(D_1),\ldots,\mathcal{O}(D_n)\}$ be a strong exceptional collection of line bundles on $\mathbb{P}_{\mathbf{\Sigma}}$. We pick $i_0\in \{1,\ldots,n\}$ such that $\alpha(D_{i_0})=\mathrm{max}(\alpha(\mathcal{T}))$. Then either $\mathrm{Ext}^{\mathrm{rk}(N)}(D_k, D_{i_0}-E_{J})=0$ for all $k\in\{1,\ldots,n\}$ and all $J\subseteq I_{+}$ or $\mathrm{Ext}^{\mathrm{rk}(N)}(D_{i_0}+E_{L}, D_j)=0$ for all $j\in\{1,\ldots,n\}$ and all $L\subseteq I_{-}$, or both.
\end{lemma}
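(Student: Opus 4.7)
The plan is to argue by contradiction, combining the two assumed nonvanishings into a single nonvanishing between members of $\mathcal{T}$ that contradicts strong exceptionality. Suppose both conclusions fail: then there exist $k_0,j_0\in\{1,\ldots,n\}$, $J_0\subseteq I_+$, and $L_0\subseteq I_-$ with
$$\mathrm{Ext}^{\mathrm{rk}(N)}(\mathcal{O}(D_{k_0}),\mathcal{O}(D_{i_0}-E_{J_0}))\neq 0\quad\text{and}\quad\mathrm{Ext}^{\mathrm{rk}(N)}(\mathcal{O}(D_{i_0}+E_{L_0}),\mathcal{O}(D_{j_0}))\neq 0.$$

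By the cohomology formulas listed before Lemma \ref{E+E-notin}, the first condition translates to $D_{i_0}-D_{k_0}=E_{J_0}+\sum_i(<0)E_i$ and the second to $D_{j_0}-D_{i_0}=E_{L_0}+\sum_i(<0)E_i$. Equivalently, writing $A=D_{i_0}-D_{k_0}$ and $B=D_{j_0}-D_{i_0}$, the $E_i$-coefficient of $A$ is $\le 0$ for $i\in J_0$ and $\le -1$ for $i\notin J_0$, while the $E_i$-coefficient of $B$ is $\le 0$ for $i\in L_0$ and $\le -1$ for $i\notin L_0$.

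The crucial structural observation is that $J_0\cap L_0=\emptyset$, which follows immediately from $J_0\subseteq I_+$ and $L_0\subseteq I_-$. Hence for every $i\in\{1,\ldots,m\}$ the $E_i$-coefficient of $A+B=D_{j_0}-D_{k_0}$ is at most $-1$: it is $\le 0+(-1)$ if $i\in J_0$, $\le(-1)+0$ if $i\in L_0$, and $\le(-1)+(-1)$ otherwise. Thus $D_{j_0}-D_{k_0}=\sum_i(<0)E_i$, giving $\mathrm{Ext}^{\mathrm{rk}(N)}(\mathcal{O}(D_{k_0}),\mathcal{O}(D_{j_0}))\neq 0$. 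Since this difference is strictly negative in every coordinate it is nonzero, so $k_0\neq j_0$; but strong exceptionality of $\mathcal{T}$ (combined with $\mathrm{rk}(N)\geq 1$, which holds in the rank-two Picard case since then $m\geq 3$) forces any positive-degree $\mathrm{Ext}$ between distinct members of $\mathcal{T}$ to vanish, a contradiction.

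I do not expect any real obstacle here: the argument is pure combinatorial bookkeeping with the Ext-vanishing formulas. The only insight needed is that the disjointness $J_0\cap L_0=\emptyset$ is exactly what allows the non-negative slots of $A$ and $B$ to be absorbed by the strictly negative slots of the other, producing a strictly negative sum in every coordinate.
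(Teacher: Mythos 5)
Your proof is correct and follows essentially the same route as the paper: assume both alternatives fail, translate the two nonvanishing $\mathrm{Ext}^{\mathrm{rk}(N)}$ conditions into coefficient inequalities, add them, and use $J_0\subseteq I_+$, $L_0\subseteq I_-$ (hence disjointness) to see that every coefficient of $D_{j_0}-D_{k_0}$ is $\le -1$, contradicting strong exceptionality. The paper packages the absorption step by rewriting each $\sum(<0)E_i$ as $-E_+-E_-+\sum(\le 0)E_i$ rather than checking coordinates case by case, but the content is identical.
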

\begin{proof}
If $\mathrm{Ext}^{\mathrm{rk}(N)}(\mathcal{O}(D_k),  \mathcal{O}(D_{i_0}-E_J))\neq 0$ for some $k$ and some $J\subseteq I_{+}$, then $$D_{i_0}-D_k-E_J=\sum(<0)E_i=-E_-+\sum_{I^-}(\leq 0)E_i+\sum_{I^+}(< 0)E_i.$$
If $\mathrm{Ext}^{\mathrm{rk}(N)}(\mathcal{O}(D_{i_0}+E_L),  \mathcal{O}(D_j))\neq 0$ for some $j$ and some $L\subseteq I_{-}$, then $$D_j-D_{i_0}-E_L=\sum(<0)E_i=-E_++\sum_{I^+}(\leq 0)E_i+\sum_{I^-}(< 0)E_i.$$
We add the two equations to get $$D_j-D_k-E_J-E_L=-E_+-E_-+\sum_{I^+}(< 0)E_i+\sum_{I^-}(< 0)E_i.$$
Thus
\begin{equation*}
\begin{split}
D_j-D_k &=(-E_++E_J)+(-E_-+E_L)+\sum_{I^+}(< 0)E_i+\sum_{I^-}(< 0)E_i\\
&=\sum_{I^+}(< 0)E_i+\sum_{I^-}(< 0)E_i
\end{split}
 \end{equation*}
 since $J\subseteq I_{+}$ and $L\subseteq I_{-}$. This implies $\mathrm{Ext}^{\mathrm{rk}(N)}(\mathcal{O}(D_k),  \mathcal{O}(D_j))\neq 0$ which contradicts the assumption that $\mathcal{T}$ is a strong exceptional collection.
\end{proof}

\begin{lemma}\label{+in}
Let $\mathcal{T}=\{\mathcal{O}(D_1),\ldots,\mathcal{O}(D_n)\}$ be a strong exceptional collection of maximal length on $\mathbb{P}_{\mathbf{\Sigma}}$. We pick $i_0\in \{1,\ldots,n\}$ such that $\alpha(D_{i_0})=\mathrm{max}(\alpha(\mathcal{T}))$.
Assume $\mathrm{Ext}^{\mathrm{rk}(N)}(\mathcal{O}(D_k), \mathcal{O}(D_{i_0}-E_{J}))=0$ for all $k\in\{1,\ldots,n\}$ and all proper subsets $J\subsetneqq I_+$. Then $\mathcal{O}(D_{i_0}-E_{+})\in \mathcal{D}(\mathcal{T})$.
\end{lemma}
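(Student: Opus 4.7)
My approach will adapt the proof of Corollary \ref{s_1+n} from the rank-one case, replacing the full Koszul complex on all $m$ coordinate sections with the Koszul complex supported on the primitive collection $I_+$. First, I will argue that $I_+$ is a primitive collection of the fan $\Sigma$: by Proposition \ref{alpha}, the cones spanned by $\{v_i:i\in I_+\}$ and by $\{v_i:i\in I_-\}$ share the interior point $\sum_{i\in I_+}\alpha_iv_i=-\sum_{i\in I_-}\alpha_iv_i$, so neither set of rays can span a cone of $\Sigma$. Consequently the coordinate sections $x_i$ for $i\in I_+$ have no common zero on $\mathbb{P}_{\mathbf{\Sigma}}$, and the associated Koszul complex is exact. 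Tensoring with $\mathcal{O}(D_{i_0})$, I obtain an exact complex
\begin{equation*}
0\to \mathcal{O}(D_{i_0}-E_+)\to \cdots \to \bigoplus_{i\in I_+}\mathcal{O}(D_{i_0}-E_i)\to \mathcal{O}(D_{i_0})\to 0
\end{equation*}
whose leftmost term is the sheaf we wish to place in $\mathcal{D}(\mathcal{T})$, and whose remaining terms are direct sums of sheaves $\mathcal{O}(D_{i_0}-E_J)$ ranging over proper subsets $J\subsetneqq I_+$.

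Next, I will show every such intermediate sheaf $\mathcal{O}(D_{i_0}-E_J)$ with $J\subsetneqq I_+$ already lies in $\mathcal{T}$. Fix such a $J$. By Lemma \ref{J}, five of the six potentially non-vanishing $\mathrm{Ext}^{>0}$ groups between $\mathcal{O}(D_{i_0}-E_J)$ and any $\mathcal{O}(D_k)\in\mathcal{T}$ vanish; the remaining one, $\mathrm{Ext}^{\mathrm{rk}(N)}(\mathcal{O}(D_k),\mathcal{O}(D_{i_0}-E_J))$, is zero by the hypothesis of the lemma. By Remark \ref{indexed}, $\mathcal{T}\cup\{\mathcal{O}(D_{i_0}-E_J)\}$ can therefore be linearly ordered into a strong exceptional collection. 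Since $\mathcal{T}$ has the maximal possible length $\mathrm{rk}(K_0(\mathbb{P}_{\mathbf{\Sigma}}))$, Corollary \ref{<rkK} forces $\mathcal{O}(D_{i_0}-E_J)\in\mathcal{T}$; the case $J=\emptyset$ is trivial since $\mathcal{O}(D_{i_0})\in\mathcal{T}$ by definition of $i_0$.

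Finally, splitting the twisted Koszul complex into successive short exact sequences and using that the triangulated subcategory $\mathcal{D}(\mathcal{T})$ is closed under extensions, I inductively conclude $\mathcal{O}(D_{i_0}-E_+)\in\mathcal{D}(\mathcal{T})$. The main obstacle I anticipate is the justification of the Koszul resolution on a stack rather than a scheme; this reduces to the standard fact that on a smooth DM stack, any collection of line-bundle sections with empty common zero locus yields a locally contractible Koszul complex (at each point at least one section becomes a unit), and hence an exact sequence of sheaves.
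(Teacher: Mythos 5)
Your proposal is correct and follows essentially the same route as the paper's proof: first force $\mathcal{O}(D_{i_0}-E_J)\in\mathcal{T}$ for every proper $J\subsetneqq I_+$ by combining Lemma \ref{J} with the hypothesis and invoking the maximality bound of Corollary \ref{<rkK}, then resolve $\mathcal{O}(D_{i_0}-E_+)$ by the Koszul complex on $I_+$ twisted by $\mathcal{O}(D_{i_0})$. The only addition is your justification of the exactness of that Koszul complex (which the paper takes from \cite{BHKing}), and it is sound, though note that $\sum_{i\in I_+}\alpha_i v_i$ may be the origin, in which case the cone on $\{v_i\}_{i\in I_+}$ fails to be strongly convex rather than sharing an interior point with another cone; either way $I_+$ spans no cone of $\Sigma$ and the sections have empty common zero locus.
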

\begin{proof}
We have $\mathcal{O}(D_{i_0}-E_{J}) \in \mathcal{T}$ for all $J\subsetneqq I_{+}$. Otherwise, there is $J\subsetneqq I_{+}$ such that $\mathcal{O}(D_{i_0}-E_{J}) \notin \mathcal{T}$. By Lemma \ref{J}, we can add $\mathcal{O}(D_{i_0}-E_{J})$ to $\mathcal{T}$ to get a strong
exceptional collection with more than $\mathrm{rk}(K_0(\mathbb{P}_{\mathbf{\Sigma}}))$ elements. This is impossible by Corollary \ref{<rkK}.

\smallskip

Now we consider the Koszul complex
$$0 \rightarrow \mathcal{O}(-E_+) \rightarrow \cdots \rightarrow \bigoplus_{i\in I_+}\mathcal{O}(-E_i)\rightarrow \mathcal{O} \rightarrow 0.$$
We tensor the complex by $\mathcal{O}(D_{i_0})$ to get
$$0 \rightarrow \mathcal{O}(D_{i_0}-E_+) \rightarrow \cdots \rightarrow \bigoplus_{i\in I_+}\mathcal{O}(-E_i+D_{i_0})\rightarrow \mathcal{O}(D_{i_0}) \rightarrow 0.$$
Since $\mathcal{O}(D_{i_0}-E_{J}) \in \mathcal{T}$ for all $J\subsetneqq I_{+}$, we get $\mathcal{O}(D_{i_0}-E_+) \in \mathcal{D}(\mathcal{T})$.
\end{proof}

\begin{lemma}\label{-in}
Let $\mathcal{T}=\{\mathcal{O}(D_1),\ldots,\mathcal{O}(D_n)\}$ be a strong exceptional collection of maximal length on $\mathbb{P}_{\mathbf{\Sigma}}$. We pick $i_0\in \{1,\ldots,n\}$ such that $\alpha(D_{i_0})=\mathrm{max}(\alpha(\mathcal{T}))$.
Assume $\mathrm{Ext}^{\mathrm{rk}(N)}(\mathcal{O}(D_{i_0}+E_{L}), \mathcal{O}(D_j))=0$ for any $j\in\{1,\ldots,n\}$ for any subset $L\subsetneqq I_-$. Then $\mathcal{O}(D_{i_0}+E_{-})\in \mathcal{D}(\mathcal{T})$.
\end{lemma}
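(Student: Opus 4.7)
The plan is to mirror the proof of Lemma \ref{+in}, exchanging the roles of $I_+$ and $I_-$ throughout. The argument naturally splits into two steps: first populate $\mathcal{T}$ with the line bundles $\mathcal{O}(D_{i_0}+E_L)$ for every proper subset $L\subsetneq I_-$, and then resolve $\mathcal{O}(D_{i_0}+E_-)$ by a Koszul complex whose other terms lie in $\mathcal{D}(\mathcal{T})$.

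For the first step, fix a proper subset $L\subsetneq I_-$ and suppose for contradiction that $\mathcal{O}(D_{i_0}+E_L)\notin\mathcal{T}$. Lemma \ref{L} already gives vanishing of five of the six potentially nonzero $\mathrm{Ext}^{>0}$ groups between $\mathcal{O}(D_{i_0}+E_L)$ and each $\mathcal{O}(D_j)\in\mathcal{T}$: namely $\mathrm{Ext}^*(\mathcal{O}(D_{i_0}+E_L),\mathcal{O}(D_j))=0$ for $*=+,-$ and $\mathrm{Ext}^*(\mathcal{O}(D_j),\mathcal{O}(D_{i_0}+E_L))=0$ for $*=\mathrm{rk}(N),+,-$. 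The one remaining group, $\mathrm{Ext}^{\mathrm{rk}(N)}(\mathcal{O}(D_{i_0}+E_L),\mathcal{O}(D_j))$, is zero by the standing hypothesis. Hence by Remark \ref{indexed} we may adjoin $\mathcal{O}(D_{i_0}+E_L)$ to $\mathcal{T}$ and reorder to obtain a strong exceptional collection of length $n+1$, contradicting Corollary \ref{<rkK} since $\mathcal{T}$ was already of maximal length.

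For the second step, tensor the Koszul complex
\begin{equation*}
0 \to \mathcal{O}(-E_-) \to \cdots \to \bigoplus_{i\in I_-}\mathcal{O}(-E_i) \to \mathcal{O} \to 0
\end{equation*}
of \cite{BHKing} by $\mathcal{O}(D_{i_0}+E_-)$ to obtain an exact complex
\begin{equation*}
0 \to \mathcal{O}(D_{i_0}) \to \cdots \to \bigoplus_{i\in I_-}\mathcal{O}(D_{i_0}+E_- - E_i) \to \mathcal{O}(D_{i_0}+E_-) \to 0.
\end{equation*}
Every term other than the rightmost is of the form $\mathcal{O}(D_{i_0}+E_L)$ for some proper subset $L\subsetneq I_-$, and therefore lies in $\mathcal{T}\subseteq\mathcal{D}(\mathcal{T})$ by the first step. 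Consequently $\mathcal{O}(D_{i_0}+E_-)\in\mathcal{D}(\mathcal{T})$.

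No genuine obstacle is expected: the argument is strictly dual to Lemma \ref{+in}, and the only point requiring care is verifying that the five vanishings supplied by Lemma \ref{L} combine with the hypothesis of the present lemma to give the full list of six $\mathrm{Ext}^{>0}$ vanishings needed to enlarge $\mathcal{T}$ in step one.
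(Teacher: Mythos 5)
Your proposal is correct and is essentially the proof the paper intends: the paper's own proof of Lemma \ref{-in} is simply ``Analogous to Lemma \ref{+in},'' and you have written out exactly that analogue, combining the five vanishings from Lemma \ref{L} with the hypothesis to force each $\mathcal{O}(D_{i_0}+E_L)$, $L\subsetneq I_-$, into $\mathcal{T}$ via Corollary \ref{<rkK}, and then resolving $\mathcal{O}(D_{i_0}+E_-)$ by the Koszul complex for $I_-$.
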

\begin{proof}
Analogous to Lemma \ref{+in}.
\end{proof}

\begin{lemma}\label{newstrong+}
Let $\mathcal{T}=\{\mathcal{O}(D_1),\ldots,\mathcal{O}(D_n)\}$ be a strong exceptional collection of line bundles of maximal length on $\mathbb{P}_{\mathbf{\Sigma}}$.
We pick $i_0\in \{1,\ldots,n\}$ such that $\alpha(D_{i_0})=\mathrm{max}(\alpha(\mathcal{T}))$.
Assume $\mathrm{Ext}^{\mathrm{rk}(N)}(\mathcal{O}(D_k), \mathcal{O}(D_{i_0}-E_{J}))=0$ for any $k\in\{1,\ldots,n\}$ and any subset $J\subseteq I_+$. Then we can get a new strong exceptional collection by replacing $\mathcal{O}(D_{i_0})$ with $\mathcal{O}(D_{i_0}-E_{+})$ and reordering.
\end{lemma}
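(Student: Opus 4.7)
The plan is to invoke Remark \ref{indexed}: to show that $\mathcal{T}':=(\mathcal{T}\setminus\{\mathcal{O}(D_{i_0})\})\cup\{\mathcal{O}(D_{i_0}-E_+)\}$ can be ordered into a strong exceptional collection, it suffices to check that $\mathrm{Ext}^{>0}$ vanishes between every unordered pair in $\mathcal{T}'$, together with $\mathcal{O}(D_{i_0}-E_+)\notin\mathcal{T}$ (immediate from Lemma \ref{E+E-notin}). Pairs inside $\mathcal{T}\setminus\{\mathcal{O}(D_{i_0})\}$ inherit vanishing from $\mathcal{T}$, so the real content is the six vanishings involving $\mathcal{O}(D_{i_0}-E_+)$ and each $\mathcal{O}(D_k)$ with $k\ne i_0$, namely $\mathrm{Ext}^{\mathrm{rk}(N)}$, $\mathrm{Ext}^+$, $\mathrm{Ext}^-$ in each of the two directions, these being the only possibly nonzero higher Ext groups by the four displayed formulas in the section.

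These vanishings resemble the conclusions of Lemma \ref{J} at the boundary value $J=I_+$, augmented by one extra hypothesis. First, the hypothesis of the lemma (taking $J=I_+$) supplies exactly the sixth vanishing $\mathrm{Ext}^{\mathrm{rk}(N)}(\mathcal{O}(D_k),\mathcal{O}(D_{i_0}-E_+))=0$. Second, the arguments for parts (1), (3), (4) of Lemma \ref{J} use no strict-$\alpha$ estimate and go through verbatim at $J=I_+$, delivering the vanishings of $\mathrm{Ext}^{\mathrm{rk}(N)}(\mathcal{O}(D_{i_0}-E_+),\mathcal{O}(D_k))$, $\mathrm{Ext}^{-}(\mathcal{O}(D_{i_0}-E_+),\mathcal{O}(D_k))$ and $\mathrm{Ext}^{+}(\mathcal{O}(D_k),\mathcal{O}(D_{i_0}-E_+))$.

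The crux is to extend parts (2) and (5) of Lemma \ref{J} to $J=I_+$, where the strict inequality $\alpha(E_+)>\alpha(E_J)$ collapses to equality. For $\mathrm{Ext}^+(\mathcal{O}(D_{i_0}-E_+),\mathcal{O}(D_k))$, a nonzero assumption yields $D_k-D_{i_0}=\sum_{i\in I_-}(<0)E_i+\sum_{i\in I_+}(\geq -1)E_i$. Let $J'\subseteq I_+$ be the indices whose $I_+$-coefficient equals $-1$. If $J'\subsetneqq I_+$, the expression rewrites as $D_k-(D_{i_0}-E_{J'})=\sum_{i\in I_-}(<0)E_i+\sum_{i\in I_+}(\geq 0)E_i$, contradicting Lemma \ref{J}(2) at the proper subset $J'$. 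If $J'=I_+$, then $\alpha(D_k)\le\alpha(D_{i_0})$ combined with $\alpha(E_+)=-\alpha(E_-)$ forces each $I_-$-coefficient to also equal $-1$, so $D_k-D_{i_0}=-\sum_{i=1}^{m}E_i$ and hence $\mathrm{Ext}^{\mathrm{rk}(N)}(\mathcal{O}(D_{i_0}),\mathcal{O}(D_k))\ne 0$, contradicting the strong exceptionality of $\mathcal{T}$. A parallel analysis for $\mathrm{Ext}^-(\mathcal{O}(D_k),\mathcal{O}(D_{i_0}-E_+))$ forces $D_{i_0}=D_k$, contradicting $k\ne i_0$ since the line bundles in an exceptional collection are distinct.

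The main obstacle is precisely this boundary case $J=I_+$: Lemma \ref{J} was stated for proper subsets because its $\alpha$-estimate is strict only then, so at the boundary one must bootstrap, either via Lemma \ref{J} at a smaller proper $J'\subsetneqq I_+$ or via the original strong exceptionality of $\mathcal{T}$.
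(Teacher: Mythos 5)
Your proof is correct and follows essentially the same route as the paper: both reduce to verifying the six possible $\mathrm{Ext}^{>0}$ vanishings between $\mathcal{O}(D_{i_0}-E_+)$ and the other $\mathcal{O}(D_k)$, take the sixth one from the hypothesis, reuse the arguments of Lemma \ref{J} for the three that survive at $J=I_+$, and handle the two boundary cases ($\mathrm{Ext}^+$ out of and $\mathrm{Ext}^-$ into the new bundle) by exploiting the equality $\alpha(E_+)=-\alpha(E_-)$ together with maximality of $\alpha(D_{i_0})$ and the strong exceptionality of $\mathcal{T}$. The only cosmetic difference is that you bootstrap the boundary cases through Lemma \ref{J} at a smaller proper subset $J'$, whereas the paper redoes the $\alpha$-estimates directly.
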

\begin{proof}
We will carefully check vanishing of all six $\mathrm{Ext}^{>0}$ spaces with the new element of the collection.

\smallskip

$(1)$ We have $\mathrm{Ext}^{\mathrm{rk}(N)}(\mathcal{O}(D_{i_0}-E_+), \mathcal{O}(D_k))=0$ by the same argument as in $(1)$ of Lemma \ref{J}.

\smallskip

$(2)$ We have $\mathrm{Ext}^{+}(\mathcal{O}(D_{i_0}-E_+), \mathcal{O}(D_k))=0$. Otherwise, we get $D_k-D_{i_0}+E_{+}=\sum_{i\in I_-}(< 0)E_i+\sum_{i\in I_+}(\geq 0)E_i$. So $D_k-D_{i_0}=-E_--E_++\sum_{i\in I_-}(\leq 0)E_i+\sum_{i\in I_+}(\geq 0)E_i$.
We have $\alpha(-E_--E_+)=0$. Also, the coefficients in $\sum_{i\in I_-}(\leq 0)E_i+\sum_{i\in I_+}(\geq 0)E_i$ cannot be all zero. Otherwise, we have $D_k-D_{i_0}=-E_--E_+$. This implies $\mathrm{Ext}^{\mathrm{rk}(N)}(D_{i_0}, D_k)\neq0$ which contradicts that $\mathcal{T}$ is a strong exceptional collection. Now we get $\alpha(\sum_{i\in I_-}(\leq 0)E_i+\sum_{i\in I_+}(\geq 0)E_i)> 0$. Thus $\alpha(D_k-D_{i_0})>0$ which contradicts the assumption that $\alpha(D_{i_0})=\mathrm{max}(\alpha(\mathcal{T}))$.

\smallskip

$(3)$ We have $\mathrm{Ext}^{-}(\mathcal{O}(D_{i_0}-E_{+}), \mathcal{O}(D_k))=0$ by the same argument as in $(3)$ of Lemma \ref{J}.

\smallskip

$(4)$ By assumption, $\mathrm{Ext}^{\mathrm{rk}(N)}(\mathcal{O}(D_k), \mathcal{O}(D_{i_0}-E_+))=0$ for all $k\in\{1,\ldots,n\}$.

\smallskip

$(5)$ We have $\mathrm{Ext}^+(\mathcal{O}(D_k), \mathcal{O}(D_{i_0}-E_+))=0$ by the same argument as in $(4)$ of Lemma \ref{J}.

\smallskip

$(6)$ We have $\mathrm{Ext}^-(\mathcal{O}(D_k), \mathcal{O}(D_{i_0}-E_{+}))=0$. Otherwise, we have $D_{i_0}-E_{+}-D_k=\sum_{i\in I_+}(< 0)E_i+\sum_{i\in I_-}(\geq 0)E_i$. Thus $D_{i_0}-D_k=\sum_{i\in I_+}(< 0)E_i+E_++\sum_{i\in I_-}(\geq 0)E_i$. If one of the coefficients in $\sum_{i\in I_+}(< 0)E_i$ is less than $-1$, then $\alpha(\sum_{i\in I_+}(< 0)E_i)=\sum_{i\in I_+}(< 0)\alpha_i< \sum_{i\in I_+}(-1)\alpha_i=\alpha(-E_+)$. So $\alpha(\sum_{i\in I_+}(< 0)E_i+E_+)< 0$. If all the coefficients in $\sum_{i\in I_+}(< 0)E_i$ equal $-1$, then $D_{i_0}-D_k=\sum_{i\in I_-}(\geq 0)E_i$. Since $D_{i_0}\neq D_k$, the coefficients in $\sum_{i\in I_-}(\geq 0)E_i$ cannot be all zero. Thus $\alpha(\sum_{i\in I_-}(\geq 0)E_i)<0$. Now, we obtain that either $\alpha(\sum_{i\in I_+}(< 0)E_i+E_+)< 0$ or $\alpha(\sum_{i\in I_-}(\geq 0)E_i)<0$. Therefore $\alpha(D_{i_0}-D_k)=\alpha(\sum_{i\in I_+}(< 0)E_i+E_++\sum_{i\in I_-}(\geq 0)E_i)<0$ which contradicts the assumption that $\alpha(D_{i_0})=\mathrm{max}(\alpha(\mathcal{T}))$.

\smallskip

We have verified that there are no $\mathrm{Ext}^{>0}$ spaces between the new element and other elements of the collection.
\end{proof}

\begin{lemma}\label{newstrong-}
Let $\mathcal{T}=\{\mathcal{O}(D_1),\ldots,\mathcal{O}(D_n)\}$ be a strong exceptional collection of line bundles of maximal length on $\mathbb{P}_{\mathbf{\Sigma}}$.
We pick $i_0\in \{1,\ldots,n\}$ such that $\alpha(D_{i_0})=\mathrm{max}(\alpha(\mathcal{T}))$.
Assume $\mathrm{Ext}^{\mathrm{rk}(N)}(\mathcal{O}(D_{i_0}+E_{L}), \mathcal{O}(D_j))=0$ for any $j\in\{1,\ldots,n\}$ for any subset $L\subseteq I_-$. Then we can get a new strong exceptional collection in $\mathcal{D}(\mathcal{T})$ by replacing $\mathcal{O}(D_{i_0})$ with $\mathcal{O}(D_{i_0}+E_{-})$ and reordering.
\end{lemma}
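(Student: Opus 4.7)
The plan is to mirror the proof of Lemma \ref{newstrong+}, interchanging the roles of $I_+$ and $I_-$ and replacing the shift $-E_+$ by $+E_-$ throughout. First, the containment $\mathcal{O}(D_{i_0}+E_-) \in \mathcal{D}(\mathcal{T})$ is immediate from Lemma \ref{-in}, since the hypothesis of the present lemma is exactly the hypothesis of Lemma \ref{-in}. Thus the substantive task is to verify the vanishing of all six $\mathrm{Ext}^{>0}$ spaces between the new element $\mathcal{O}(D_{i_0}+E_-)$ and each $\mathcal{O}(D_k)$ for $k\neq i_0$; reordering to put $\mathcal{O}(D_{i_0}+E_-)$ at its correct position (possible by Remark \ref{indexed}) then yields a strong exceptional collection inside $\mathcal{D}(\mathcal{T})$.

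Of the six $\mathrm{Ext}^{>0}$ groups, five are already treated by Lemma \ref{L}, namely the two $\mathrm{Ext}^{\pm}$ spaces with $\mathcal{O}(D_{i_0}+E_L)$ in the first slot and the three spaces $\mathrm{Ext}^{\mathrm{rk}(N),+,-}(\mathcal{O}(D_k),\mathcal{O}(D_{i_0}+E_L))$; however, Lemma \ref{L} only asserts these for \emph{proper} subsets $L\subsetneqq I_-$. The sixth group, $\mathrm{Ext}^{\mathrm{rk}(N)}(\mathcal{O}(D_{i_0}+E_L),\mathcal{O}(D_k))=0$, is given by the hypothesis of the lemma for all $L\subseteq I_-$ (including $L=I_-$). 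Hence the real work is to extend the five vanishings of Lemma \ref{L} to the boundary case $L=I_-$, i.e.\ to $E_L=E_-$.

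Proceeding case by case as in Lemma \ref{newstrong+}, three of the five arguments from Lemma \ref{L} carry over verbatim because they depend only on sign patterns in the $E_i$ expansion, not on the strict inequality $\alpha(E_L)<\alpha(E_-)$ that held only for proper $L$. The remaining two are the exact analogs of cases (2) and (6) in the proof of Lemma \ref{newstrong+}: in each, a naive estimate yields only a non-strict inequality $\alpha(D_k-D_{i_0})\geq 0$ (or $\alpha(D_{i_0}-D_k)\leq 0$), while one needs strict inequality to contradict $\alpha(D_{i_0})=\mathrm{max}(\alpha(\mathcal{T}))$. The fix is the same trick used in Lemma \ref{newstrong+}: if all the ``free'' coefficients in the expansion were zero, the resulting identity would force either $D_k=D_{i_0}$ (impossible for $k\neq i_0$, since $\mathcal{T}$ has distinct elements) or $\mathrm{Ext}^{\mathrm{rk}(N)}(\mathcal{O}(D_{i_0}),\mathcal{O}(D_k))\neq 0$ (contradicting exceptionality of $\mathcal{T}$); thus at least one free coefficient is nonzero, restoring the strict inequality needed.

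The main obstacle is not conceptual but bookkeeping: one must keep straight the six combinations of sign, support (in $I_+$ or $I_-$), and direction of $\mathrm{Hom}$/$\mathrm{Ext}$, and confirm that the sign flips induced by swapping $I_+\leftrightarrow I_-$ and $-E_+\leftrightarrow +E_-$ preserve the validity of each estimate. No new idea beyond those in Lemma \ref{newstrong+} is needed, which is why the author is justified in leaving the detailed case analysis to the reader.
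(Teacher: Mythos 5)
Your proposal is correct and follows exactly the route the paper intends: the published proof is literally ``Analogous to Lemma \ref{newstrong+},'' and you have carried out that analogy faithfully, correctly identifying that only the two $\alpha$-based estimates (the analogs of cases (2) and (6)) need the boundary fix at $L=I_-$ while the sign-pattern arguments and the hypothesis cover the rest. Nothing further is needed.
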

\begin{proof}
Analogous to Lemma \ref{newstrong+}.
\end{proof}

\begin{proposition}\label{smallwidth}
Let $\mathcal{T}=\{\mathcal{O}(D_1),\ldots,\mathcal{O}(D_n)\}$ be a strong exceptional collection of line bundles of maximal length on $\mathbb{P}_{\mathbf{\Sigma}}$.  We can construct a new strong exceptional collection $\mathcal{S}$ in $\mathcal{D}(\mathcal{T})$ such that $\mathrm{max}(\alpha(\mathcal{S}))-\mathrm{min}(\alpha(\mathcal{S}))<\alpha(E_+)=\alpha(-E_-)$.
\end{proposition}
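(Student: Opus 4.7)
The plan is to iterate the moves supplied by Lemmas~\ref{newstrong+} and~\ref{newstrong-}, at every step operating on an element that currently achieves $\max(\alpha(\mathcal{T}_k))$, until the $\alpha$-width drops below $\alpha(E_+)$. Concretely, I would set $\mathcal{T}_0:=\mathcal{T}$ and, given $\mathcal{T}_k$, test whether $\max(\alpha(\mathcal{T}_k))-\min(\alpha(\mathcal{T}_k))<\alpha(E_+)$; if so, take $\mathcal{S}:=\mathcal{T}_k$ and stop. Otherwise choose an index $i_0$ with $\alpha(D_{i_0})=\max(\alpha(\mathcal{T}_k))$. Lemma~\ref{JL} provides the crucial dichotomy: either the hypothesis of Lemma~\ref{newstrong+} is satisfied, in which case I replace $\mathcal{O}(D_{i_0})$ by $\mathcal{O}(D_{i_0}-E_+)$, or the hypothesis of Lemma~\ref{newstrong-} is satisfied, in which case I replace $\mathcal{O}(D_{i_0})$ by $\mathcal{O}(D_{i_0}+E_-)$. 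In either case Lemma~\ref{+in} or~\ref{-in} ensures that the new line bundle lies in $\mathcal{D}(\mathcal{T}_k)\subseteq\mathcal{D}(\mathcal{T})$, and Lemma~\ref{newstrong+} or \ref{newstrong-} ensures that the resulting collection $\mathcal{T}_{k+1}$, after reordering, is again a strong exceptional collection of maximal length contained in $\mathcal{D}(\mathcal{T})$.

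The second task is to establish termination. Since $\alpha(E_+)+\alpha(E_-)=0$, both allowed substitutions decrease the $\alpha$-value of the chosen element by exactly $\alpha(E_+)>0$. The running hypothesis $\max(\alpha(\mathcal{T}_k))-\min(\alpha(\mathcal{T}_k))\geq\alpha(E_+)$ yields
\[
\alpha(D_{i_0})-\alpha(E_+)\;=\;\max(\alpha(\mathcal{T}_k))-\alpha(E_+)\;\geq\;\min(\alpha(\mathcal{T}_k)),
\]
so the moved element never drops below the current minimum and $\min(\alpha(\mathcal{T}_k))$ is non-decreasing along the iteration. Consequently the monovariant
\[
\Phi(\mathcal{T}_k):=\sum_{D\in\mathcal{T}_k}\alpha(D)
\]
strictly decreases by $\alpha(E_+)$ at each step while remaining bounded below by $n\cdot\min(\alpha(\mathcal{T}_0))$, where $n=\mathrm{rk}(K_0(\mathbb{P}_{\mathbf{\Sigma}}))$ is the fixed cardinality. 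Because $\alpha_1,\dots,\alpha_m$ are rational, $\alpha(\mathrm{Pic}(\mathbb{P}_{\mathbf{\Sigma}}))$ is contained in a discrete subgroup of $\mathbb{R}$, so $\Phi$ can take only finitely many values in the admissible range and the loop must halt.

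The main obstacle I anticipate is precisely this preservation of $\min(\alpha(\mathcal{T}_k))$, since without it the monovariant $\Phi$ would have no a priori lower bound and the iteration could in principle run forever. Once that short observation is in place, the remainder is essentially bookkeeping, and the terminal collection $\mathcal{S}$ automatically enjoys all three required properties: it is strong exceptional (by Lemma~\ref{newstrong+}/\ref{newstrong-}), contained in $\mathcal{D}(\mathcal{T})$ (by Lemma~\ref{+in}/\ref{-in} and induction), and of $\alpha$-width strictly less than $\alpha(E_+)$ (by the stopping criterion).
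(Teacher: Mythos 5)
Your proposal is correct and follows essentially the same route as the paper: iterate the replacement of a max-$\alpha$ element by $\mathcal{O}(D_{i_0}-E_+)$ or $\mathcal{O}(D_{i_0}+E_-)$, using the dichotomy of Lemma~\ref{JL} together with Lemmas~\ref{+in}, \ref{-in}, \ref{newstrong+} and \ref{newstrong-} to keep the collection strong exceptional, of maximal length, and inside $\mathcal{D}(\mathcal{T})$. Your termination argument via the monovariant $\Phi=\sum_{D}\alpha(D)$, resting on the observation that $\min(\alpha(\mathcal{T}_k))$ cannot decrease while the width is still at least $\alpha(E_+)$, is a clean and in fact more explicit substitute for the paper's case analysis showing that the $\alpha$-width decreases at each round.
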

\begin{proof}
The argument is similar to that of Theorem \ref{main1}.
Let $\alpha(D_{i_0})=\mathrm{max}(\alpha(\mathcal{T}))$.
By Lemma \ref{JL}, we have either $\mathrm{Ext}^{\mathrm{rk}(N)}(D_k, D_{i_0}-E_{J})=0$ for any $k\in\{1,\ldots,m\}$ and any $J\subseteq I_{+}$ or $\mathrm{Ext}^{\mathrm{rk}(N)}(D_{i_0}+E_{L}, D_j)=0$ for any $j\in\{1,\ldots,m\}$ and any $L\subseteq I_{-}$. By Lemma \ref{+in}, Lemma \ref{-in}, Lemma \ref{newstrong+} and Lemma \ref{newstrong-}, we get a new strong exceptional collection $\mathcal{T}'$ in $\mathcal{D}(\mathcal{T})$ by replacing $\mathcal{O}(D_{i_0})$ by $\mathcal{O}(D_{i_0}-E_+)$ or $\mathcal{O}(D_{i_0}+E_-)$, and reordering. See Figure \ref{fig:3}.

\smallskip

We have $\mathrm{max}(\alpha(\mathcal{T}'))\leq\mathrm{max}(\alpha(\mathcal{T}))$ since $\alpha(D_{i_0}-E_+)=\alpha(D_{i_0}+E_-)<\alpha(D_{i_0})=\mathrm{max}(\alpha(\mathcal{T}))$.
After a finite number of steps, we replace successively all $\mathcal{O}(D_{i})$ such that $\alpha(D_{i})=\mathrm{max}(\alpha(\mathcal{T}))$ by $\mathcal{O}(D_{i}-E_+)$ or $\mathcal{O}(D_{i}+E_-)$ to get a new strong exceptional collection $\mathcal{T}^1$ in $\mathcal{D}(\mathcal{T})$ such that $\mathrm{max}(\alpha(\mathcal{T}^1))<\mathrm{max}(\alpha(\mathcal{T}))$.

\smallskip

If $\mathrm{min}(\alpha(\mathcal{T}^1))<\mathrm{min}(\alpha(\mathcal{T}))$, there exists some $D_{}$ such that $\alpha(D_{i})=\mathrm{max}(\alpha(\mathcal{T}))$ and $\alpha(D_{i}\mp E_{\pm})=\mathrm{min}(\alpha(\mathcal{T}))$. Now we have $$\mathrm{max}(\alpha(\mathcal{T}^1))-\mathrm{min}(\alpha(\mathcal{T}^1))<\mathrm{max}(\alpha(\mathcal{T}))-\mathrm{min}(\alpha(\mathcal{T}^1))=\alpha(D_{i})-\alpha(D_{i}\mp E_{\pm})=\alpha(E_+).$$

\smallskip

If $\mathrm{min}(\alpha(\mathcal{T}^1))\geq\mathrm{min}(\alpha(\mathcal{T}))$, then $\mathrm{max}(\alpha(\mathcal{T}^1))-\mathrm{min}(\alpha(\mathcal{T}^1))<\mathrm{max}(\alpha(\mathcal{T}))-\mathrm{min}(\alpha(\mathcal{T}))$.

\smallskip

This process decreases $\mathrm{max}(\alpha(\mathcal{T}))-\mathrm{min}(\alpha(\mathcal{T}))$. Eventually we will be in the situation that $\mathrm{max}(\alpha(\mathcal{T}))-\mathrm{min}(\alpha(\mathcal{T}))<\alpha(E_+)=\alpha(-E_-)$.
\end{proof}

\begin{proposition}\label{move1}
Let $\mathcal{T}=\{\mathcal{O}(D_1),\ldots,\mathcal{O}(D_n)\}$ be a strong exceptional collection of line bundles with length $n=\mathrm{rk}(K_0(\mathbb{P}_{\mathbf{\Sigma}}))$.
 Assume $$\mathrm{max}(f(\mathcal{T}))-\mathrm{min}(f(\mathcal{T}))\geq f(E_++E_-)=1.$$
 We pick $i_0\in\{1,\ldots,n\}$ such that $\alpha(D_{i_0})=\mathrm{max}(\alpha(\mathcal{T}))$.
Then we can replace $\mathcal{O}(D_{i_0})$ with $\mathcal{O}(D_{i_0}-E_+)$ or $\mathcal{O}(D_{i_0}+E_-)$ to get another strong exceptional collection $\mathcal{T}'$ in $\mathcal{D}(\mathcal{T})$ such that:
\begin{enumerate}
\item $\mathrm{max}(f(\mathcal{T}'))\leq \mathrm{max}(f(\mathcal{T}))$;
 \item  $\mathrm{min}(f(\mathcal{T}'))\geq\mathrm{min}(f(\mathcal{T}))$;
\item $\sharp(\mathcal{T}'_{\mathrm{min}(f)})\leq \sharp(\mathcal{T}_{\mathrm{min}(f)})$ if $\mathrm{min}(f(\mathcal{T}'))= \mathrm{min}(f(\mathcal{T}))$;
\item $\sharp(\{D_i\in \mathcal{T}'| f(D_i)=\mathrm{min}(f(\mathcal{T}))\})< \sharp(\mathcal{T}_{\mathrm{min}(f)})$ if $f(D_{i_0})=\mathrm{min}(f(\mathcal{T}))$.
\end{enumerate}
\end{proposition}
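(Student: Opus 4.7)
Set $M=\max(f(\mathcal{T}))$ and $m=\min(f(\mathcal{T}))$. The plan is to use Lemma \ref{JL} to identify which of the two candidate replacements is available, and then to prove a quantitative converse of that lemma which forces the available move to automatically respect the $f$-conditions. Recall that Lemma \ref{JL} gives at least one of (A) $\mathrm{Ext}^{\mathrm{rk}(N)}(\mathcal{O}(D_k),\mathcal{O}(D_{i_0}-E_J))=0$ for all $k$ and all $J\subseteq I_+$, or (B) $\mathrm{Ext}^{\mathrm{rk}(N)}(\mathcal{O}(D_{i_0}+E_L),\mathcal{O}(D_j))=0$ for all $j$ and all $L\subseteq I_-$. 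By Lemmas \ref{+in} and \ref{newstrong+}, condition (A) produces the $-E_+$ replacement inside $\mathcal{D}(\mathcal{T})$; by Lemmas \ref{-in} and \ref{newstrong-}, condition (B) produces the $+E_-$ replacement.

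The core new ingredient is the following converse: if (A) fails then $f(D_{i_0})+f(E_-)<M$ strictly, and if (B) fails then $f(D_{i_0})-f(E_+)>m$ strictly. For the first, failure of (A) supplies $k$ and $J\subseteq I_+$ with $D_{i_0}-E_J-D_k=\sum c_iE_i$, each $c_i\leq -1$. Applying $f$ and using $\sum r_i=1$ yields $f(D_{i_0})-f(E_J)-f(D_k)\leq -1$, hence $f(D_{i_0})\leq M+f(E_J)-1\leq M-f(E_-)$. Equality forces $f(D_k)=M$, $J=I_+$, and all $c_i=-1$, which rearranges to $D_k=D_{i_0}+E_-$, contradicting Lemma \ref{E+E-notin}. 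The argument for (B) is symmetric and again uses Lemma \ref{E+E-notin} to rule out equality.

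An arithmetic step closes the gap: under the hypothesis $M-m\geq 1=f(E_+)+f(E_-)$, at least one of (a) $f(D_{i_0})+f(E_-)\leq M$ or (b) $f(D_{i_0})-f(E_+)>m$ must hold, since otherwise $M-f(E_-)<f(D_{i_0})\leq m+f(E_+)$ would give $M-m<1$. I now combine: if (A) fails, Lemma \ref{JL} gives (B) and the converse gives (a), so I perform the $+E_-$ move; if (B) fails, I perform the $-E_+$ move; if both (A) and (B) hold, I pick the move corresponding to whichever of (a), (b) is satisfied. In each case the four conclusions follow by direct check: the sign of $f$ on the perturbation delivers (1) or (2) for free, the strict inequality in (b) forces $f(D_{i_0})>m$ and $f(D_{i_0}-E_+)>m$ so that the new element does not join $\mathcal{T}_{\min(f)}$ (giving (3), (4) for the $-E_+$ move), and the $+E_-$ move strictly raises $f$ at $D_{i_0}$ so that (3), (4) hold trivially while (a) supplies (1).

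The main obstacle is the converse of Lemma \ref{JL} in the second paragraph. It is the only step where Lemma \ref{E+E-notin} and the normalization $f(E_++E_-)=1$ cooperate to rule out the borderline case of equality, which is precisely what would otherwise break condition (3) in the $-E_+$ move; without the strict inequalities produced there, one could not align the move that is available by Lemma \ref{JL} with the move that is demanded by the $f$-hypothesis.
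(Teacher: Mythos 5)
Your proof is correct and follows essentially the same route as the paper: both rest on Lemma \ref{JL} plus the key computation of applying the positive linear functional $f$ to the condition $D_2-D_1=\sum(<0)E_i$ to get $f(D_2-D_1)\leq -1$, thereby tying the Ext-availability of each move to its $f$-admissibility, and both then invoke Lemmas \ref{+in}, \ref{-in}, \ref{newstrong+}, \ref{newstrong-} to build $\mathcal{T}'$ in $\mathcal{D}(\mathcal{T})$. The only difference is organizational: the paper cases on where $f(D_{i_0}\mp E_{\pm})$ sits relative to $[\min(f(\mathcal{T})),\max(f(\mathcal{T}))]$ and derives the required Ext-vanishing, while you run the contrapositive (Ext-nonvanishing forces strict $f$-bounds, with Lemma \ref{E+E-notin} ruling out the borderline equality) --- the content is the same.
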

\begin{proof}
If
\begin{equation}\label{fine}
\mathrm{min}(f(\mathcal{T}))<f(D_{i_0}-E_+)<f(D_{i_0}+E_-)\leq  \mathrm{max}(f(\mathcal{T})),
\end{equation}
by Lemma \ref{JL}, Lemma \ref{newstrong+} and Lemma \ref{newstrong-}, we can replace $\mathcal{O}(D_{i_0})$ with $\mathcal{O}(D_{i_0}-E_+)$ or $\mathcal{O}(D_{i_0}+E_-)$ to reach the result.

\smallskip

If Equation \ref{fine} fails, there are several cases to consider.

\smallskip

 {\bf Case }$f(D_{i_0}-E_+)\leq\mathrm{min}(f(\mathcal{T}))$. We have $f(D_{i_0}+E_-)\leq \mathrm{max}(f(\mathcal{T}))$ by the assumption that $\mathrm{max}(f(\mathcal{T}))-\mathrm{min}(f(\mathcal{T}))\geq f(E_++E_-)=1$. We show that replacing $\mathcal{O}(D_{i_0})$ with $\mathcal{O}(D_{i_0}+E_-)$ is possible and will achieve our goal, see $(2)$ of Figure \ref{fig:4}. We have $\mathrm{Ext}^{\mathrm{rk}(N)}(D_{i_0}+E_{L}, D_j)=0$ for all $j\in\{1,\ldots,m\}$ and $L\subsetneqq I_{-}$. Otherwise, we get $D_j-D_{i_0}-E_{L}=\sum_{i\in \{1,\ldots,m\}}(<0)E_i=-E_+-E_-+\sum_{i\in \{1,\ldots,m\}}(\leq0)E_i$ for some $j$ and some $L\subsetneqq I_{-}$. Thus $D_j-D_{i_0}+E_{+}=(E_L-E_-)+\sum_{i\in \{1,\ldots,m\}}(\leq0)E_i$. Then $f(D_j-D_{i_0}+E_{+})=f((E_L-E_-)+\sum_{i\in \{1,\ldots,m\}}(\leq0)E_i)<0$ which contradicts $f(D_{i_0}-E_+)\leq\mathrm{min}(f(\mathcal{T}))$. Then by Lemma \ref{+in}, the line bundle $\mathcal{O}(D_{i_0}+E_-)\in\mathcal{D}(\mathcal{T})$.

\smallskip

Also, we have $\mathrm{Ext}^{\mathrm{rk}(N)}(D_{i_0}+E_{-}, D_j)=0$ for all $j\in\{1,\ldots,m\}$. Otherwise, we get
$D_j-D_{i_0}-E_{-}=\sum_{i\in \{1,\ldots,m\}}(<0)E_i=-E_+-E_-+\sum_{i\in \{1,\ldots,m\}}(\leq0)E_i$ for some $j$. Thus $D_j-D_{i_0}=-E_++\sum_{i\in \{1,\ldots,m\}}(\leq0)E_i$. If the coefficients in $\sum_{i\in \{1,\ldots,m\}}(\leq0)E_i$ are not all zero,
then $f(D_j-D_{i_0}+E_+)=f(\sum_{i\in \{1,\ldots,m\}}(\leq0)E_i)<0$, which contradicts that $f(D_{i_0}-E_+)\leq\widetilde{f}(\mathcal{T})$. 
If the coefficients in $\sum_{i\in \{1,\ldots,m\}}(\leq0)E_i$ are all zero, then $D_j-D_{i_0}=-E_+$. This implies $\mathrm{Ext}^{-}(\mathcal{O}(D_{i_0}),\mathcal{O}(D_j))\neq 0$ which contradicts the assumption that $\mathcal{T}$ is a strong exceptional collection.
\begin{figure}[H]
  \includegraphics[width=0.99\textwidth]{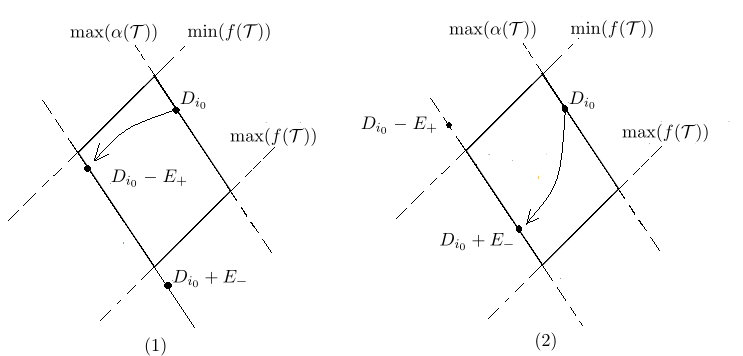}
  \caption{}
  \label{fig:4}
\end{figure}

Then by Lemma \ref{newstrong+}, we get a strong exceptional collection $\mathcal{T}'$ in $\mathcal{D}(\mathcal{T})$ by replacing $D_{i_0}$ with $\mathcal{O}(D_{i_0}+E_-)$ which satisfies $(2)$, $(3)$ and $(4)$ of this Proposition. Since $f(D_{i_0}+E_-)\leq \mathrm{max}({f(\mathcal{T})})$, then $\mathrm{max}({f(\mathcal{T}')})\leq \mathrm{max}({f(\mathcal{T})})$.

\smallskip

 {\bf Case } $f(D_{i_0}+E_-)> \mathrm{max}({f(\mathcal{T})})$. We have $f(D_{i_0}-E_+)> \mathrm{min}({f(\mathcal{T})})$. By the same arguments, we can get a strong exceptional collection $\mathcal{T}'$ in $\mathcal{D}(\mathcal{T})$ by replacing $D_{i_0}$ with $\mathcal{O}(D_{i_0}-E_+)$ which satisfies $(1)$, $(3)$ and $(4)$ of this Proposition, see $(1)$ of Figure \ref{fig:4}. Since $f(D_{i_0}-E_+)> \mathrm{min}({f(\mathcal{T})})$, then $\mathrm{min}({f(\mathcal{T}')})\geq \mathrm{min}({f(\mathcal{T})})$.
\end{proof}

\begin{remark}\label{<=,<}
Let $\mathcal{T}=\{\mathcal{O}(D_1),\ldots,\mathcal{O}(D_n)\}$ be a strong exceptional collection of line bundles with length $n=\mathrm{rk}(K_0(\mathbb{P}_{\mathbf{\Sigma}}))$.
 Assume all line bundles in $\mathcal{T}$ are within a strip of $\alpha$ with width less than $\alpha(E_+)$  and $\mathrm{max}(f(\mathcal{T}))-\mathrm{min}(f(\mathcal{T}))\geq f(E_++E_-)=1$.
After doing the move in Proposition \ref{move1}, we can guarantee that all line bundles in the new strong exceptional collection is within a strip of $\alpha$ with width less or equal to $\alpha(E_+)$. After replacing all $D_j$ in $\mathcal{T}$ such that $\alpha(D_j)=\mathrm{max}(\alpha(\mathcal{T}))$, we get the width of the strip of $\alpha$ to be less than $\alpha(E_+)$.
\end{remark}

\begin{theorem}\label{main2}
Let $\mathbb{P}_{\mathbf{\Sigma}}$ be a Fano toric DM stack with $\mathrm{rank}(\mathrm{Pic} (\mathbb{P}_{\mathbf{\Sigma}}))=2$.
Assume $\mathcal{T}=\{\mathcal{O}(D_1),\ldots,\mathcal{O}(D_n)\}$ be a strong exceptional collection of line bundles with length $n=\mathrm{rk}(K_0(\mathbb{P}_{\mathbf{\Sigma}}))$. Then $\mathcal{T}$ is a full strong exceptional collection.
\end{theorem}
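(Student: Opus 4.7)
The strategy is to iteratively replace $\mathcal{T}$ by strong exceptional collections of the same length sitting inside $\mathcal{D}(\mathcal{T})$, shrinking both the $\alpha$- and $f$-widths until the collection fits inside a translate $p + P$ of the Bondal--Hille parallelogram of Proposition \ref{p+P}. At that stage, a counting argument forces the resulting collection to coincide with the full strong exceptional collection supplied by Proposition \ref{p+P}, whence $\mathcal{D}(\mathcal{T}) = \mathbf{D}^b(\mathrm{coh}(\mathbb{P}_{\mathbf{\Sigma}}))$.

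Concretely, first apply Proposition \ref{smallwidth} to pass to a strong exceptional collection $\mathcal{T}^0 \subseteq \mathcal{D}(\mathcal{T})$ of maximal length with $\mathrm{max}(\alpha(\mathcal{T}^0)) - \mathrm{min}(\alpha(\mathcal{T}^0)) < \alpha(E_+)$; since $\mathcal{D}(\mathcal{T}^0) \subseteq \mathcal{D}(\mathcal{T})$, it suffices to show $\mathcal{T}^0$ is full. If $\mathrm{max}(f(\mathcal{T}^0)) - \mathrm{min}(f(\mathcal{T}^0)) < 1$ already, then the bounding box of $\mathcal{T}^0$ in the $(\alpha,f)$-plane is strictly contained in a parallelogram of the dimensions of $P$, so there is a generic translate $p + P$ (avoiding line bundles on its boundary) with $\mathcal{T}^0 \subseteq p+P$. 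Proposition \ref{p+P} provides exactly $\mathrm{rk}(K_0(\mathbb{P}_{\mathbf{\Sigma}}))$ line bundles inside $p+P$ forming a full strong exceptional collection, and since $\mathcal{T}^0$ has the same cardinality and lies in this set, the two coincide and $\mathcal{T}^0$ is full.

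In the remaining case $\mathrm{max}(f(\mathcal{T}^0)) - \mathrm{min}(f(\mathcal{T}^0)) \geq 1$, alternate applications of Proposition \ref{move1} (to shrink the $f$-range) with Proposition \ref{smallwidth} (to re-impose the $\alpha$-width bound via Remark \ref{<=,<}). The lexicographic invariant $\Phi(\mathcal{T}) := \bigl(\mathrm{max}(f(\mathcal{T})) - \mathrm{min}(f(\mathcal{T})),\ \sharp\mathcal{T}_{\mathrm{min}(f)}\bigr)$ is non-increasing by properties (1)--(3) of Proposition \ref{move1} and strictly drops whenever the chosen maximal-$\alpha$ element sits at $\mathrm{min}(f)$, by property (4). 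Because the procedure stays inside a bounded region of $\mathrm{Pic}(\mathbb{P}_{\mathbf{\Sigma}})$ and hence visits only finitely many distinct collections of length $n$, after finitely many iterations one reaches a collection with $\alpha$-width $< \alpha(E_+)$ and $f$-width $< 1$, completing the reduction to the previous case.

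The main obstacle is the termination bookkeeping: one must verify that the alternating applications of Propositions \ref{smallwidth} and \ref{move1} cannot cycle forever, which amounts to checking that the invariant $\Phi$ (perhaps supplemented by secondary invariants counting extremal $\alpha$-multiplicities, since a single application of Proposition \ref{move1} need not immediately fire property (4)) is forced to make strict progress across a full round of moves. Once this bookkeeping is in place, the geometric counting step that recognizes a sufficiently thin collection as the set of line bundles in a generic $p+P$ closes the argument, and the case of torsion in $\mathrm{Pic}(\mathbb{P}_{\mathbf{\Sigma}})$ follows by the minor modifications mentioned at the end of Section \ref{1}.
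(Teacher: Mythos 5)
Your proposal follows essentially the same route as the paper: first Proposition \ref{smallwidth} to bound the $\alpha$-width, then repeated applications of Proposition \ref{move1} tracked by the $f$-width and $\sharp\mathcal{T}_{\mathrm{min}(f)}$, finishing by identifying the resulting thin collection with the set of line bundles in a generic translate $p+P$ via Proposition \ref{p+P} and a cardinality count. The termination bookkeeping you flag is handled in the paper exactly along the lines of your "secondary invariant": since the $\alpha$-width stays below $\alpha(E_+)$ (Remark \ref{<=,<}), each move sends the replaced maximal-$\alpha$ element below the current minimum of $\alpha$, so the number of elements whose $\alpha$-value exceeds that of a fixed element of $\mathcal{T}_{\mathrm{min}(f)}$ strictly decreases until that element becomes $\alpha$-maximal and property (4) of Proposition \ref{move1} fires.
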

\begin{proof}
Without of loss of generality, we can assume that $\mathrm{max}(\alpha(\mathcal{T}))-\mathrm{min}(\alpha(\mathcal{T}))<\alpha(E_+)=\alpha(-E_-)$ by Proposition \ref{smallwidth}.

\smallskip

Let $D_j$ be an element in $\mathcal{T}$ such that $f(D_j)=\mathrm{min}(f(\mathcal{T}))$. If $\alpha(D_j)=\mathrm{max}(\alpha(\mathcal{T}))$, then by Proposition \ref{move1}, after replacing $\mathcal{O}(D_j)$ with $\mathcal{O}(D_j-E_+)$ or $\mathcal{O}(D_j+E_-)$, we get another strong exceptional collection $\mathcal{T}'$ such that $\sharp(\{D_i\in \mathcal{T}'| f(D_i)=\mathrm{min}(f(\mathcal{T}))\})< \sharp(\mathcal{T}_{\mathrm{min}(f)})$. If $\alpha(D_j)<\mathrm{max}(\alpha(\mathcal{T}))$, then by repeating the process in Proposition \ref{move1} several times, we will get to the situation that $\alpha$ takes maximal value at $D_j$, see Figure \ref{fig:5}.
\begin{figure}[H]
  \includegraphics[width=0.89\textwidth]{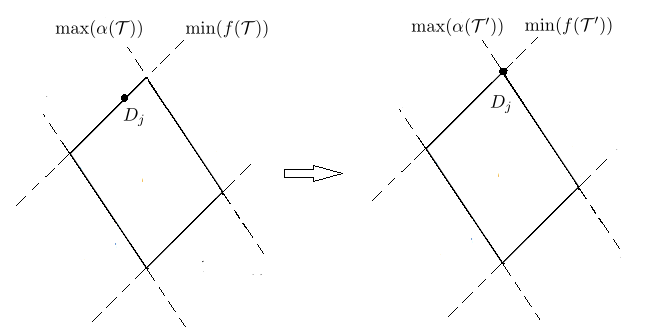}
  \caption{}
  \label{fig:5}
\end{figure}
After replacing all elements in $\mathcal{T}_{\mathrm{min}(f)}$, we get $\mathrm{min}(f(\mathcal{T}))$ increase. Then we continue to apply Proposition \ref{move1}. During the process, we assure that $\mathrm{max}(f(\mathcal{T}))$ does not increase and $\mathrm{min}(f(\mathcal{T}))$ increases. Thus $\mathrm{max}(f(\mathcal{T}))-\mathrm{min}(f(\mathcal{T}))$ decreases. Therefore, we will eventually be in the situation $\mathrm{max}(f(\mathcal{T}))-\mathrm{min}(f(\mathcal{T}))<1$.

\smallskip

Also, by Remark \ref{<=,<}, we get a new strong exceptional collection $\mathcal{S}$ of line bundles in $\mathcal{D}(\mathcal{T})$ such that $\mathrm{max}(\alpha(\mathcal{S}))-\mathrm{min}(\alpha(\mathcal{S}))<\alpha(E_+)$ and $\mathrm{max}(f(\mathcal{S}))-\mathrm{min}(f(\mathcal{S}))<1$. So $\mathcal{S}$ is a full strong exceptional collection by Proposition \ref{p+P}. Thus $\mathcal{D}(\mathcal{T})\supseteq\mathcal{D}(\mathcal{S})=\mathbf{D}^b(coh(\mathbb{P}_{\mathbf{\Sigma}}))$.
\end{proof}

\begin{remark}
When $\mathrm{Pic} (\mathbb{P}_{\mathbf{\Sigma}})$ has torsion, the arguments of this section go through without significant change. The details are left to the reader.
\end{remark}
\section{Comments}\label{comments}
We expect our main result to be valid without the assumption on the rank of Picard group, as stated in Conjecture \ref{anyrank}.
Also, in the case of $\mathrm{rk}(\mathrm{Pic} (\mathbb{P}_{\mathbf{\Sigma}}))=1$, we know that any exceptional collection of line bundles is a strong exceptional collection by Remark \ref{ec,sec}. However, in the case of $\mathrm{rk}(\mathrm{Pic} (\mathbb{P}_{\mathbf{\Sigma}}))=2$,  Theorem \ref{main2} does not tell us that every exceptional collection of maximal length is a full exceptional collection. Thus we hope we can drop strong assumption to ask whether every exceptional collection of maximal length is a full exceptional collection.
The possible future directions include dimension two $\mathrm{rk}(\mathrm{Pic} (\mathbb{P}_{\mathbf{\Sigma}}))=3$ Fano case, and dimension two non-Fano case.
We hope that techniques of this paper can be modified to settle them.


\smallskip

Moreover, in our proofs when we replace $j_0\in\{1,\ldots,n\}$ such that $\alpha(D_{j_0})=\mathrm{max}(\alpha(\mathcal{T}))$
with $\mathcal{O}(D_{i_0}-E_+)$ or $\mathcal{O}(D_{i_0}+E_-)$, the strong exceptional collection "shrinks" in $\mathrm{Pic} (\mathbb{P}_{\mathbf{\Sigma}})$.
We would like to find a more geometric meaning of this phenomenon.

\end{document}